
\documentclass[sn-mathphys,Numbered]{sn-jnl}% Math and Physical Sciences Reference Style
%%\documentclass[sn-aps]{sn-jnl}% American Physical Society (APS) Reference Style
%%\documentclass[sn-vancouver,Numbered]{sn-jnl}% Vancouver Reference Style
%%\documentclass[sn-apa]{sn-jnl}% APA Reference Style 
%%\documentclass[sn-chicago]{sn-jnl}% Chicago-based Humanities Reference Style
%%\documentclass[default]{sn-jnl}% Default
%%\documentclass[default,iicol]{sn-jnl}% Default with double column layout

%%%% Standard Packages
%%<additional latex packages if required can be included here>

\usepackage{graphicx}%
\usepackage{multirow}%
\usepackage{amsmath,amssymb,amsfonts}%
\usepackage{amsthm}%
\usepackage{mathrsfs}%
\usepackage[title]{appendix}%
\usepackage{xcolor}%
\usepackage{textcomp}%
\usepackage{manyfoot}%
\usepackage{booktabs}%
\usepackage{algorithm}%
\usepackage{algorithmicx}%
\usepackage{algpseudocode}%
\usepackage{listings}%
\usepackage{subcaption}
\usepackage{geometry}
\geometry{margin=2 cm}
\usepackage{amsmath}
\usepackage{bm}
%%%%

%%%%%=============================================================================%%%%
%%%%  Remarks: This template is provided to aid authors with the preparation
%%%%  of original research articles intended for submission to journals published 
%%%%  by Springer Nature. The guidance has been prepared in partnership with 
%%%%  production teams to conform to Springer Nature technical requirements. 
%%%%  Editorial and presentation requirements differ among journal portfolios and 
%%%%  research disciplines. You may find sections in this template are irrelevant 
%%%%  to your work and are empowered to omit any such section if allowed by the 
%%%%  journal you intend to submit to. The submission guidelines and policies 
%%%%  of the journal take precedence. A detailed User Manual is available in the 
%%%%  template package for technical guidance.
%%%%%=============================================================================%%%%

%\jyear{2021}%

%% as per the requirement new theorem styles can be included as shown below

\newtheorem{theorem}{Theorem}%  meant for continuous numbers
%%\newtheorem{theorem}{Theorem}[section]% meant for sectionwise numbers
%% optional argument [theorem] produces theorem numbering sequence instead of independent numbers for Proposition
% 
%%\newtheorem{proposition}{Proposition}% to get separate numbers for theorem and proposition etc.

%
%
\newcommand{\D}{\mathrm{d}}
\newcommand{\ee}{\mathrm{e}}
\newcommand{\ii}{\mathrm{i}}
\newtheorem{lemma}{Lemma}
\newtheorem{assumption}{Assumption}
\newtheorem*{notation*}{Notations}

\raggedbottom
%%\unnumbered% uncomment this for unnumbered level heads

\begin{document}

\title[Article Title]{Numerical integrator for highly oscillatory differential equations based on the Neumann series }

%%=============================================================%%
%% Prefix	-> \pfx{Dr}
%% GivenName	-> \fnm{Joergen W.}
%% Particle	-> \spfx{van der} -> surname prefix
%% FamilyName	-> \sur{Ploeg}
%% Suffix	-> \sfx{IV}
%% NatureName	-> \tanm{Poet Laureate} -> Title after name
%% Degrees	-> \dgr{MSc, PhD}
%% \author*[1,2]{\pfx{Dr} \fnm{Joergen W.} \spfx{van der} \sur{Ploeg} \sfx{IV} \tanm{Poet Laureate} 
%%                 \dgr{MSc, PhD}}\email{iauthor@gmail.com}
%%=============================================================%%

\author*[2]{\fnm{Rafa{\l}} \sur{Perczy\'{n}ski}}\email{rafal.perczynski@phdstud.ug.edu.pl}

\author[1]{\fnm{Grzegorz} \sur{Madejski}}

\affil*[1]{Institute of Informatics, Faculty of Mathematics, Physics and Informatics, University of Gda\'{n}sk,  80-308 Gda\'{n}sk, Poland}
\affil[2]{Institute of Mathematics, Faculty of Mathematics, Physics and Informatics, University of Gda\'{n}sk, 80-308 Gda\'{n}sk, Poland}

%%==================================%%
%% sample for unstructured abstract %%
%%==================================%%

\abstract{We propose a third-order numerical integrator based on the Neumann series and the Filon quadrature,
designed mainly for highly oscillatory partial differential equations. The method can be applied to
equations that exhibit small or moderate oscillations; however, counter-intuitively, large oscillations
increase the accuracy of the scheme. With the proposed approach, the convergence order of the method can be easily improved. Error analysis of the method is also performed. We consider linear evolution equations involving first- and
second-time derivatives that feature elliptic differential operators, such as the heat equation or the
wave equation. Numerical experiments consider the case in which the space dimension is greater than one and confirm the theoretical study.}

\keywords{highly oscillatory PDEs; numerical integration; the Filon method; the Neumann series}

%%\pacs[JEL Classification]{D8, H51}

\pacs[MSC Classification]{65M38,65M70,65M99 	 }

\maketitle

\section{Introduction}\label{sec1}

We consider the following type of highly oscillatory partial differential equations
\begin{align}
\label{eq:1.5}
&\partial_tu(x,t)=\mathcal{L}u(x,t)+f(x,t)u(x,t),\qquad t\in [0,t^\star],\quad x\in\Omega\subset\mathbb{R}^m,\\ \nonumber
&u(x,0)=u_0(x),
\end{align}
with zero boundary conditions, where $\Omega$ is an open and bounded subset of $\mathbb{R}^m$  with smooth boundary $\partial \Omega$, $t^\star>0$ and $\mathcal{L}$ is a linear differential operator of degree $2p$, $p\in \mathbb{N}$, defined on  $\Omega$
\begin{align} \label{diff_operator}
\mathcal{L} = \sum_{|\bm{p}|\leq 2p}^{}a_{\bm{p}}(x) D^{\bm{p}},  \qquad D^{\bm{p}}= \frac{\partial^{p_1}}{\partial x_1^{p_1}}\frac{\partial^{p_2}}{\partial x_2^{p_2}}\dots \frac{\partial^{p_m}}{\partial x_m^{p_m}}, \quad  x\in \Omega\subset \mathbb{R}^m.
\end{align}
Multi-index $\bm{p}$ is an $m$-tuple of nonnegative integers $\bm{p} = (p_1,p_2,\dots,p_m)$ and $a_{\bm{p}}(x)$ are smooth, complex-valued functions of $x \in \bar{\Omega}$.
We assume that function $f(x,t)$ from the equation (\ref{eq:1.5}) is a highly oscillatory of type
\begin{equation}\label{ideal_function_f}
f(x,t) = \sum_{\substack{n=-N \\ n\neq 0}}^{N}\alpha_n(x,t)e^{i n\omega  t}, \quad \omega \gg 1, \quad N\in \mathbb{N},
\end{equation}
where $\alpha_n$ are sufficiently smooth, complex-valued functions.  The aim of this paper is to construct a numerical integrator designed for highly oscillatory equations of type (\ref{eq:1.5}).

Highly oscillatory differential equations are difficult to solve numerically and are of great importance in computational mathematics, which is why they have gained special attention in the field \cite{HLW_2006, hochbruck_ostermann_2010}, and references therein. In particular,
 differential equations of type (\ref{eq:1.5}) with extrinsic high oscillations arise in various fields, including electronic engineering \cite{Condon_2009}, when computing scattering frequencies \cite{Condon_2019}, and in quantum mechanics \cite{IKS_2018,CKLP_2021}. In addition to the previously mentioned works, computational methods dedicated to equations of type (\ref{eq:1.5}) are presented for example in \cite{Longbin_2023,BaderKG,Ait2023,Cai_2022, KL_2023}. In this paper, we present a complementary approach as discussed in the aforementioned papers. Given the generality of equations (\ref{eq:1.5}), the numerical scheme derived in this manuscript can be effectively applied to a range of linear partial differential equations, including the heat equation and the wave equation.

%Highly oscillatory differential equations have garnered attention in the field of computational mathematics \cite{BADER2019130,CONDON2021106935,Condon_Iserles_Norsett,hochbruck_ostermann_2010},  due to their numerous applications in various domains, including quantum mechanics \cite{Schratz_Faou,ISERLES2019195} and electronic engineering \cite{Condon_Iserles_engineering}. In this paper, we present a complementary approach as discussed in the aforementioned papers. Given the generality of equations (\ref{eq:1.5}), the numerical scheme derived in this manuscript can be effectively applied to a range of  linear partial differential equations, encompassing the heat equation and  the wave equation.

In traditional numerical methods applied to highly oscillating problems, it is typically a requirement that the time step $h$ satisfies $h\omega < 1$. This causes the method to become extremely expensive when $\omega$ is large. This occurs because conventional schemes are constructed using Taylor expansions, where error formulas involve expressions with high derivatives of a highly oscillatory function. On the other hand, methods for highly oscillatory equations based on the Modulated Fourier expansion or the asymptotic expansion may not converge to a solution, rendering them effective only for equations with large oscillatory parameter $\omega$. In this paper, we propose a third-order method whose accuracy improves with increasing parameter $\omega$ and decreasing time step $h$.
  Furthermore, the approach presented in this paper allows for easy improvement of the convergence order of the proposed numerical integrator. 

To provide a more detailed understanding of the challenges associated with the numerical approximation of highly oscillating differential equations, let us apply Duhamel's formula to equation (\ref{eq:1.5}) and write it in the following integral form
\begin{eqnarray}\label{Duhamel_intro}
    u(t+h)&=& \ee^{h\mathcal{L}}u(t) +\int_0^h\ee^{(h-\tau)\mathcal{L}}f(t+\tau)u(t+\tau)\D\tau,
\end{eqnarray}
where  $u_0$ and $f(s), \ u(s)$ for  fixed $s$ are elements of the appropriate Banach spaces.
Let us note that the second time derivative of the solution to equation (\ref{eq:1.5}) satisfies 
$$u''(t) = \mathcal{L}u'(t)+f'(t)u(t)+f(t)u'(t),$$
 therefore $u''(t) = \mathcal{O}(\omega)$ and furthermore $u^{(k)}(t) = \mathcal{O}(\omega^{k-1})$. This implies that approximating the integral from equation (\ref{Duhamel_intro}) using standard quadrature rules, as in basic numerical schemes, leads to a significant error. 
 For that reason, we use a different approach to this issue. In paper \cite{perczyński2023asymptotic}, it was shown that the solution to equation (\ref{eq:1.5}) can be presented as the Neumann series. Subsequently, by expanding asymptotically each integral within the Neumann series, it was demonstrated that the solution of the equation can be expressed as the Modulated Fourier expansion. The Modulated Fourier expansion, also known as an asymptotic expansion or frequency expansion, is a technique used for analyzing highly oscillatory problems. It is comprehensively and thoroughly described in \cite{HLW_2006}.
 By representing the solution as the Neumann series, the time derivatives of the solution, which can be large for highly oscillating equations, do not appear in the error formula of the numerical scheme. In this paper, instead of employing an asymptotic expansion for the integrals within the Neumann series (which is effective only in cases of high oscillations), we approximate them using quadrature rules designed for highly oscillatory integrals, such as Filon-type methods. By this approach, we can provide that the local error of the presented numerical scheme can be estimated by $Ch^4$, where constant $C$ is independent of time step $h$ and parameter $\omega$.
%For that reason, we use a different  approach to this problem. 
%First, we utilize the fact that the solution to equation (\ref{eq:1.5}) can be presented as the Neumann series [AARP].  Subsequently, at each time step, we approximate the first three highly oscillatory integrals of the Neumann series using the Filon method. By using quadrature rule for highly oscillatory integrals, one can provide that the local error of the presented numerical scheme can be estimated by $Ch^4$, where constant $C$ is independent of time step $h$ and parameter $\omega$.  
Furthermore, when considering a potential function $f$ with only positive frequencies, i.e. when only numbers $n>0$ appear in formula (\ref{ideal_function_f}), one can show that the local error is bounded by $C\min\left\{h^4, h^2\omega^{-2}, \omega^{-3}\right\}$, where again the constant $C$ is independent of both $h$ and $\omega$. 

The convergence rate of the method can be easily improved by approximating a greater number of integrals from the Neumann series. However, this enhancement comes at the cost of requiring better regularity for both the initial condition $u_0$ and the functions $\alpha_n$, and also leads to increased computational complexity.

The paper is organized as follows. In Section \ref{Preliminaries}, we introduce  the two fundamental ingredients of the proposed method -- the Neumann series and the Filon
	method. Section \ref{sec2} provides the derivation of the proposed numerical integrator.   Section \ref{sec3} is dedicated to the error analysis of the method. In Section \ref{sec4}, we demonstrate the application of the scheme to equations involving a second time derivative, while Section \ref{sec5} presents the results of numerical experiments.

\section{Preliminaries}\label{Preliminaries}
In this section, we briefly introduce the basic tools needed to build the proposed numerical method: the Neumann series and the Filon method.
\subsection*{The Neumann series}
Let us consider the following ordinary differential equation
\begin{eqnarray}\label{ODE}
Y'(t) = A(t)Y(t), \qquad Y(0)= Y_0,
\end{eqnarray}
where $Y: \mathbb{R} \to \mathbb{C}^n$ and $A(t)$ is an $n \times n$ time-dependent matrix. 
Equation (\ref{ODE}) can be written in the following form
\begin{eqnarray}\label{int_ode}
Y(t) = Y_0+\int_{0}^{t}A(\tau)Y(\tau)\D \tau.
\end{eqnarray}
By iterating  equation (\ref{int_ode}), one can show that the solution of the problem (\ref{int_ode}) is given by the  series
\begin{eqnarray}\label{ODE_series}
Y(t) = \sum_{d=0}^{\infty}T^dY_0,
\end{eqnarray}
where $$T^d Y_0= \int_0^t A(\tau_1)\int_{0}^{\tau_1}A(\tau_2)\dots \int_{0}^{\tau_{d-1}}A(\tau_d)Y_0\D\tau_d\dots\D\tau_1.$$
The series (\ref{ODE_series}) is known as the Neumann series and the Dyson series \cite{iserles_nemann}, and it converges to the solution of equation (\ref{int_ode}) for all values of $t$ provided that the matrix $A(t)$ is bounded \cite{Blanes_casas}.
\subsection*{The Filon method}
The Filon method is a quadrature rule designed for highly oscillatory integrals.
Suppose we wish to approximate the following integral
\begin{eqnarray}
I[h,(a,b)] = \int_a^b h(s)\ee^{\ii\omega g(s)}\D s,
\end{eqnarray}
where $h$ and $g$ are real-valued, sufficiently smooth functions, $h\neq 0$ in $[a,b]$ and $\omega \gg 1$.  Numerical approximation of such integrals by standard methods based on the Taylor expansion leads to a significant error.
Consider the Hermite interpolation polynomial $p$ that approximates the function $h$, $p(s)\approx h(s)$. Let $p$ satisfy the following conditions
$$p^{(k)}(a) = h^{(k)}(a), \quad p^{(k)}(b) = h^{(k)}(b), \quad k=0,1,\dots N.$$
We assume that the moments
$$\mu_k=\int_a^b s^k\ee^{\ii\omega g(s)} \D s ,\quad k= 0,1,\dots 2N+1$$
can be calculated explicitly.
The Filon method reads
\begin{equation*}
\int_a^b h(s)\ee^{\ii\omega g(s)} \D s \approx \int_a^b p(s) \ee^{\ii\omega g(s)}\D s.
\end{equation*}
%By integration by parts one can show that the error of the above approximation satisfies
%\begin{equation*}
%\left|\int_a^b h(s)\ee^{\ii\omega g(s)} \D s - \int_a^b p(s) \ee^{\ii\omega g(s)}\D s \right| \leq C\frac{1}{\omega^{N+1}},
%\end{equation*}
%where constant $C$ is independent of $%\omega$. 
%This means that 
The Filon method is very effective in the approximation of highly oscillatory integrals. Additionally, for small values of $\omega$, it behaves similarly to standard quadrature rules. \\
Filon method may be used in approximation of multivariate highly oscillatory integrals. 
Various modifications of the Filon quadrature are possible; see, for example, \cite{iserles_book}.

\section{Derivation of the method} \label{sec2}
For the convenience of presenting the method, we introduce the necessary notation and make the following general assumption,  which will be used throughout the paper.
 \begin{notation*}
By  $ H^{2p}(\Omega) =W^{2p,2}(\Omega) $, where $p$ is a nonnegative integer, we understand the Sobolev space equipped with standard norm $\| \ \ \|_{H^{2p}(\Omega)}$, and $H^{p}_0(\Omega)$ is the closure of $C_0^{\infty}(\Omega)$ in the space $H^p(\Omega)$. By $u[t](\tau)$ we understand function $u$ such that $u[t](\tau) = u(t+\tau)$.
We slightly abuse the notation and also denote $u(t):=u( \cdot  ,t)$ as an element of an appropriate Banach space. Throughout the text, by  $\| \ \ \|:=\| \ \ \|_{L^2(\Omega)}$ we understand the standard norm of $L^2(\Omega)$ space.
\end{notation*}

\begin{assumption} \label{assumption1}
Suppose that
\begin{enumerate}
    \item $\Omega $ is an open and bounded set in $\mathbb{R}^m$ with smooth boundary $\partial \Omega$.
    \item Operator  $-\mathcal{L}: D(\mathcal{L}):= H_0^p(\Omega)\cap H^{2p}(\Omega) \rightarrow L^2(\Omega)$, where $\mathcal{L}$  is of form (\ref{diff_operator}), is a strongly elliptic of order $2p$   and has smooth, complex-valued coefficients $a_{\textbf{p}}(x)$.  Moreover $2p>m/2$.
    \item $u_0 \in D(\mathcal{L}^4)$ and $\alpha_n \in C^4\left([0,t^\star ] ,H^{8p}(\Omega)\right)$, \ $n = -N,\dots,-1,1,\dots,N,$ where  \\
    $ D(\mathcal{L}^k)=\{u\in D(\mathcal{L}^{k-1}): \mathcal{L}^{k-1}u \in D(\mathcal{L})\}$, $k=2,\dots$
\end{enumerate}
\end{assumption} 
The assumed regularity of functions $u_0$ and $\alpha_n$ is related to the accuracy of the method.
 Assumption \ref{assumption1} ensures that differential operator $\mathcal{L}$  is the infinitesimal generator of  a strongly continuous semigroup $\{\ee^{t\mathcal{L}}\}_ {t\geq 0}$ on $L^2(\Omega)$ and therefore $\max_{t \in [0,t^\star]}\|\ee^{t\mathcal{L}}\|_{L^2(\Omega)\leftarrow L^2(\Omega)}\leq C(t^\star)$, where $C(t^\star)$ is some constant independent of $t$ \cite{PAZY}. 
 
We wish to build the method based on time steps. Therefore, based on the semigroup property, we can modify equation (\ref{eq:1.5}) and express it as follows
\begin{align}
\label{eq:1.5_steps}
&\partial_su[t](x,s)=\mathcal{L}u[t](x,s)+f[t](x,s)u[t](x,s),\qquad s\in [0,h],\quad x\in\Omega\subset\mathbb{R}^m,\\ \nonumber
&u[t](x,0)=u(x,t),
\end{align}
where $t\geq 0$ and $h>0$ is a  small time step.
By $u[t](x,s)$ we understand $u[t](x,s)= u(x,t+s)$.
By applying Duhamel formula to (\ref{eq:1.5_steps}), we obtain
\begin{eqnarray}\label{eq_duhamel}
    u[t](s)&=& \ee^{s\mathcal{L}}u[t](0) +\int_0^s\ee^{(s-\tau)\mathcal{L}}f[t](\tau)u[t](\tau)\D\tau.
\end{eqnarray}
We could simply write  $u[t](s)= u(t+s)$, but the above notation helps avoid misunderstandings in subsequent formulas.

 Let $V_t$ denotes the following space
\begin{eqnarray*}\label{V_space}
    V_t := C\left([t,t+h],L^2(\Omega)\right), \quad t\geq 0,\quad h >0,
\end{eqnarray*} 
Define the linear operator  $T_t:V_t\rightarrow V_t$ 
$$T_tu[t](s) = \int_0^s\ee^{(s-\tau)\mathcal{L}}f[t](\tau) u[t](\tau)\D\tau, \quad s\in[0,h],$$
where function $f$ is defined in (\ref{ideal_function_f}). 
%Therefore, it can be straightforwardly shown that the operator $T_t$ is bounded and $\| T_t \|_{V_t \leftarrow V_t}\leq h C(t^\star)\|f[t]\|_\infty .$
The Neumann series for equation (\ref{eq_duhamel}) reads
\begin{eqnarray}\label{Neumann_series}
    u[t](h) = \sum_{d=0}^{\infty}T^d_t\ee^{h\mathcal{L}}u[t](0).
\end{eqnarray}
 Term  $T^d_t\ee^{h\mathcal{L}}u[t](0), \ d=1,2,\dots$,  from (\ref{Neumann_series}) is equal to
\begin{eqnarray*}\label{T^d}
T^d_tv[t](h)=
\int_0^h\ee^{(h-\tau_d)\mathcal{L}}f[t](\tau_d)\int_0^{\tau_d}\ee^{(\tau_d-\tau_{d-1})\mathcal{L}}f[t](\tau_{d-1})\dots\int_0^{\tau_2}\ee^{(\tau_2-\tau_1)\mathcal{L}}f[t](\tau_1)\ee^{\tau_1\mathcal{L}}u[t](0)\D\tau_1\dots\D\tau_d.
\end{eqnarray*}
%\textcolor{blue}{The above expression can be written equivalently in recursive form as:
%\begin{eqnarray*}
%    T^{d}_tv[t](h)= \int_0^h \ee^{(h-\tau_d)\mathcal{L}}f[t](\tau_d)T^{d-1}_tv[t](\tau_d)\D\tau_d, \qquad T_t^0v[t](h) =v[t](h). 
%\end{eqnarray*}}
It can be shown that the  series (\ref{Neumann_series}) converges in the norms $\| \ \ \|_{L^2(\Omega)}$ and $\| \ \ \|_{H^{2p}(\Omega)}$ to the solution of equation (\ref{eq_duhamel}), where $2p >m/2$, for arbitrary time variable $h>0$ \cite{perczyński2023asymptotic}.
The idea for finding an approximate solution to equation (\ref{eq:1.5}) involves approximating the first $r$ terms of the Neumann series (\ref{Neumann_series}) using quadrature methods designated to highly oscillatory integrals. 
For convenience, we introduce a set 
\begin{eqnarray}\label{set_N}
\bm{N}^d:= \{-N,-N+1,\dots,-1,1,\dots,N-1,N\}^d \subset \mathbb{N}^d,
\end{eqnarray}
where  $2N$ is a number of terms in sum (\ref{ideal_function_f}).
Using definition (\ref{ideal_function_f}) of the function $f$ and the linearity of semigroup operator, we can write each term of the Neumann series $T^d_t\ee^{h\mathcal{L}}u[t](0), \ \D=1,2,\dots$ in a more convenient form for our considerations 
\small
\begin{eqnarray*}
 T^d_t\ee^{h\mathcal{L}}u[t](0)&=& \sum_{n_1,\dots,n_d\in \bm{N}^1}\int_{\sigma_d(h)}F_{\bm{n}}(h,\tau_1,\dots,\tau_d)\ee^{\ii\omega(  n_1(\tau_1+t)+\dots +n_d(\tau_d+t)) }\D \tau_1\dots\tau_d = \sum_{\bm{n}\in \bm{N}^d} \ee^{\ii\omega t\bm{n}^T \bm{1} }I[F_{\bm{n}},\sigma_d(h)], 
\end{eqnarray*}
\normalsize
where
\begin{eqnarray} \label{function_f_operator}
 F_{\bm{n}}(h,\tau_1,\dots\tau_d) &=& \ee^{(h-\tau_d)\mathcal{L}}\alpha_{n_d}[t](\tau_d)\ee^{(\tau_d-\tau_{d-1})\mathcal{L}}\alpha_{n_{d-1}}[t](\tau_{d-1})\dots \ee^{(\tau_{2}-\tau_1)\mathcal{L}}\alpha_{n_1}[t](\tau_1)\ee^{\tau_1\mathcal{L}}u[t](0),\\
 I[F_{\bm{n}},\sigma_d(h)]&=&  \int_{\sigma_d(h)}F_{\bm{n}}(h,\bm{\tau})\ee^{\ii\omega \bm{n}^T \bm{\tau} }\D \bm{\tau}, \nonumber \\
 \bm{\tau}&=& (\tau_1,\tau_2,\dots,\tau_d), \quad \bm{1} = (1,1,\dots,1),\nonumber
 \end{eqnarray}
 and $\sigma_d(h)$ denotes a $d$-dimensional simplex
 $$\sigma_d(h)=\{\bm{\tau}:=(\tau_1,\tau_2,\dots,\tau_d)\in \mathbb{R}^d:h\geq \tau_d\geq \tau_{d-1}\geq \dots \geq \tau_2\geq \tau_1 \geq 0\}.$$
Using the above notation,  solution $u$ of (\ref{eq_duhamel}) can be written as
 \begin{eqnarray}\label{Neumann_integrals}
 u(t+h)=u[t](h) = \ee^{h\mathcal{L}}u[t](0)+\sum_{d=1}^{\infty}\ee^{\ii\omega t\bm{n}^T \bm{1}}\sum_{\bm{n}\in \bm{N}^d} I[F_{\bm{n}},\sigma_d(h)].
 \end{eqnarray}
In the proposed numerical scheme, for each time step $h$ we take the first four terms of the above series that approximate the function $u(t+h)$
\begin{eqnarray*}\label{Neumann_integrals_sheme}
 u(t+h)\approx \ee^{h\mathcal{L}}u[t](0)+\sum_{d=1}^{3}\sum_{\bm{n}\in \bm{N}^d} \ee^{\ii\omega t\bm{n}^T \bm{1}}I[F_{\bm{n}},\sigma_d(h)].
 \end{eqnarray*}
Then, we approximate each integral $I[F_{\bm{n}},\sigma_d(h)]$ in the above sum by applying the Filon quadrature. As a result, we derive a fourth-order local method.
By employing Filon quadrature, the method's error converges to zero both as $h\to0$ and as $\omega\to \infty$.
Our decision to consider only the first four terms in the Neumann expansion is rather arbitrary. The method can be enhanced to achieve a higher level of accuracy, albeit with increased computational costs and the requirement of better regularity for functions $u_0$ and $\alpha_n$.

Consider function $F(\tau):=F_{n_1}(h,\tau)$  from the second term of the Neumann series and the following univariate integral 
\begin{eqnarray*}\label{univariate_int}
    \int_0^h F(\tau)\ee^{n_1\ii\omega \tau}\D \tau,
\end{eqnarray*}
where $n_1 =-N,-N+1,\dots,-1,1,\dots,N$.  Let $p(\tau)$ be a  cubic Hermite interpolating polynomial
\begin{eqnarray*}\label{hermite}
F(\tau) \approx p(\tau) = F(0)+a_{1,1}\tau+a_{1,2}\tau^2+a_{1,3}\tau^3, 
\end{eqnarray*}
 which satisfy the conditions:
$p(0)=F(0)$, $p(h)=F(h)$, $p'(0)=F'(0)$, $p'(h)=F'(h)$.
We have
\begin{eqnarray*}\label{uni_filon_app}
     \int_0^h F(\tau)\ee^{\ii n_1 \omega \tau}\D \tau \approx \int_0^h  p(\tau) \ee^{\ii n_1\omega\tau} \D\tau,
\end{eqnarray*}
and the moments 
$$\int_0^h \tau^k \ee^{\ii n_1\omega \tau}\D\tau, \quad k=0,1,2,3,$$
can be calculated explicitly.
Let now $F(\tau_1,\tau_2):=F_{\bm{n}}(h,\tau_1,\tau_2), \ \bm{n}\in \bm{N}^2$, and consider a bivariate integral
\begin{eqnarray} \label{int_bi}
    \int_0^h \int_0^{\tau_2}F(\tau_1,\tau_2)\ee^{\ii\omega(n_1\tau_1+n_2\tau_2)} \D\tau_1\D\tau_2.
\end{eqnarray}
We approximate function $F(\tau_1,\tau_2)$ in points $(0,0)$, $(0,h)$ and $(h,h)$, the vertices of the simplex $\sigma_2(h)$, by linear function $p(\tau_1,\tau_2)$, 
\begin{eqnarray*}\label{bivariate_approx}
F(\tau_1,\tau_2)\approx p(\tau_1,\tau_2)=F(0,0)+a_{2,1}\tau_1+a_{2,2}\tau_2.
\end{eqnarray*}
 The approximation of integral (\ref{int_bi}) by the Filon quadrature rule reads 
\begin{eqnarray*}
    \int_0^h \int_0^{\tau_2}F(\tau_1,\tau_2)\ee^{\ii\omega(n_1\tau_1+n_2\tau_2)} \D\tau_1\D\tau_2\approx\int_0^h \int_0^{\tau_2}p(\tau_1,\tau_2)\ee^{\ii\omega(n_1\tau_1+n_2\tau_2)} \D\tau_1\D\tau_2,
\end{eqnarray*}
and the integral on the right-hand side can be computed explicitly.
Similarly, we proceed with the triple integral. Function $F(\tau_1,\tau_2,\tau_3):= F_{\bm{n}}(h,\tau_1,\tau_2,\tau_3), \ \bm{n} \in \bm{N}^3$ is approximated by linear function $p$ at the vertices of the simplex $\sigma_3(h)$: 
$(0,0,0)$, $(0,0,h)$, $(0,h,h)$, $(h,h,h),$
\begin{eqnarray*}
F(\tau_1,\tau_2,\tau_3)\approx p(\tau_1,\tau_2,\tau_3)= F(0,0,0)+a_{3,1}\tau_1+a_{3,2}\tau_2+a_{3,3}\tau_3.
\end{eqnarray*}
Then we have
%\begin{footnotesize}	
\begin{small}
\begin{eqnarray*}\label{Filon_triple_integral}
    \int_0^h \int_0^{\tau_3}\int_0^{\tau_2}F(\tau_1,\tau_2,\tau_3)\ee^{\ii\omega(n_1\tau_1+n_2\tau_2+n_3\tau_3)} \D\tau_1\D\tau_2\D\tau_3\approx\int_0^h \int_0^{\tau_3}\int_0^{\tau_2}p(\tau_1,\tau_2,\tau_3)\ee^{\ii\omega(n_1\tau_1+n_2\tau_2+n_3\tau_3)} \D\tau_1\D\tau_2\D\tau_3.
\end{eqnarray*}
%\end{footnotesize}
\end{small}
The precise formulas for determining the coefficients $a_{i,j}$ are presented in the Appendix \ref{secA1}.

The proposed algorithm for computing the successive approximation of the solution $u$ can be expressed in the following form:
\begin{eqnarray}\label{scheme}
    u^{k+1} &=& \Bigg( \ee^{h\mathcal{L}}+ \sum_{n_1}  \int_0^h \left(a_{1,0}+a_{1,1}\tau+ a_{1,2}\tau^2+a_{1,3}\tau^3 \right)\ee^{n_1\ii\omega( \tau+t_k)}\D\tau \nonumber \\ &+& \sum_{n_1,n_2}\int_0^h \int_0^{\tau_2}(a_{2,0}+a_{2,1}\tau_1+a_{2,2}\tau_2)\ee^{\ii\omega(n_1(\tau_1+t_k)+n_2(\tau_2+t_k))}\D\tau_1\D\tau_2\\ &+&\sum_{n_1,n_2,n_3} \int_0^h \int_0^{\tau_3}\int_0^{\tau_2}(a_{3,0}+a_{3,1}\tau_1+a_{3,2}\tau_2+a_{3,3}\tau_3)\ee^{\ii\omega(n_1(\tau_1+t_k)+n_2(\tau_2+t_k)+n_3(\tau_3+t_k))}\D\tau_1\D\tau_2\D\tau_3\Bigg)u^k\nonumber\\
    t_{k+1} &=& t_{k}+h, \quad k=0,1,\dots,K-1,\nonumber
\end{eqnarray}
where $u^0= u_0$, $t_0=0$, $t_K=t^\star$, $n_1,n_2,n_3 \in \{-N,-N+1,\dots,-1,1,\dots,N\}$ and the coefficients $a_{i,j}$ are chosen so that the corresponding polynomial satisfies the Hermite interpolation conditions. Each of the integrals appearing in the scheme is computed explicitly. Furthermore, the expression $\ee^{h\mathcal{L}}$ and the coefficients $a_{i,j}$ of the interpolating polynomials, after spatial discretization, can be computed very efficiently and accurately using spectral methods \cite{trefethen} and/or splitting methods \cite{Splitting_acta}.

\section{Local error analysis}\label{sec3}
 The entire error of the method comes from two sources: the approximation of each integral from the partial sum of the Neumann series, and the error associated with the truncation of the Neumann expansion. 
In \cite{perczyński2023asymptotic}, the authors provide the asymptotic expansion of integral $I[F_{\bm{n}},\sigma_d(h)]$ from the Neumann series (\ref{Neumann_integrals}), where $F_{\bm{n}}$ is the function of the form (\ref{function_f_operator}), for the special case when the potential function $f$ has positive frequencies, specifically when  $f$ takes the form 
\begin{eqnarray}\label{positive_frequencies}
    f(x,t) = \sum_{\substack{n=1 }}^{N}\alpha_n(x,t)e^{i n\omega  t}, \quad \omega \gg 1, \quad N\in \mathbb{N}.
\end{eqnarray}
In such a situation,
each integral $I[F_{\bm{n}},\sigma_d(h)]$ satisfies the nonresonance condition and therefore can be approximated by the partial sum $\mathcal{S}^{(d)}_r(h)$ of the asymptotic expansion 
\begin{eqnarray*}\label{series+error}
 	I[F_{\bm{n}},\sigma_d(h)]=	\int_{\sigma_d(h)}F(h,\bm{\tau})\ee^{\ii\omega \bm{n}^T\bm{\tau} }\D \bm{\tau} = \mathcal{S}^{(d)}_r(h)+E^{(d)}_r(h), \quad r\geq d,
 	\end{eqnarray*}
 	where
$E^{(d)}_r(h)=\mathcal{O}(\omega^{-r-1})$ is the error related to approximation of integral $I[F_{\bm{n}},\sigma_d(h)]$ by sum $\mathcal{S}^{(d)}_r(h)\sim \mathcal{O}(\omega^{-d})$.
A similar result was first obtained in \cite{IN_2006}, where the authors provided the asymptotic expansion of a multivariate highly oscillatory integral over a regular simplex. However, in our analysis, the non-oscillatory function $F_{\bm{n}}$ is vector-valued rather than real-valued.
We begin the error analysis of the proposed numerical method by considering function $f$ from equation (\ref{eq:1.5}) in the form (\ref{positive_frequencies}).
Recall that $\| \ \ \|$ denotes the standard  norm of $L^2(\Omega)$ space.  In the following estimations, $C$ is some  constant that depends on functions $\alpha_n$, initial condition $u[t](0)$ of equation (\ref{eq:1.5_steps}) , their derivatives, solution $u$, differential operator $\mathcal{L}$ and $t^\star$, but it is independent of the time step $h$ and the oscillatory parameter $\omega$.
Let us also note that since by Assumption \ref{assumption1}, the function $f \in C^4\left([0,t^\star ] ,H^{8p}(\Omega)\right)$, we can apply the Sobolev embedding theorem to conclude that $\|f(s)\|_\infty < \infty$ for all $s \in [0,t^\star]$. Therefore, the norm of product of two functions $f$ and $u$ can easily be estimated as $\|f(s)u(s)\| \leq  \|f(s)\|_\infty \|u(s)\|, \ \forall s\in [0,t^\star]$.

 \subsection{Positive frequencies}
\begin{lemma}\label{lemma1}
    Let $F(\tau)$ be a 4 times continuously differentiable, vector-valued function, and let $p(\tau)$ be a cubic Hermite interpolation polynomial such that $p(0)=F(0)$, $p(h)=F(h)$, $p'(0)=F'(0)$, $p'(h)=F'(h)$.
    Then the error of the Filon method satisfies 
   \begin{eqnarray*}
     \left\|\int_0^h (F-p)(\tau)\ee^{\ii n_1 \omega \tau}\D \tau \right\|  \leq  C\min\left\{h^5, \frac{1}{\omega^3}, \frac{ h^3}{\omega^2}\right\}.
\end{eqnarray*}
\end{lemma}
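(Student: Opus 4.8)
The plan is to establish the bound $C\min\{h^5,\omega^{-3},h^3\omega^{-2}\}$ by proving each of the three quantities is separately an upper bound (up to the constant $C$) for the error $\mathcal{E}:=\left\|\int_0^h(F-p)(\tau)\ee^{\ii n_1\omega\tau}\,\D\tau\right\|$, and then taking the minimum. Here $n_1$ ranges over the nonzero integers in $\{-N,\dots,N\}$, so $|n_1|\geq 1$ and the oscillator $\ee^{\ii n_1\omega\tau}$ genuinely oscillates when $\omega\gg1$.

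First I would deal with the $h^5$ bound. This is the classical error estimate for cubic Hermite interpolation on an interval of length $h$: for a $C^4$ function $F$ one has $\|(F-p)(\tau)\|\leq \frac{1}{384}h^4\max_{\xi\in[0,h]}\|F^{(4)}(\xi)\|$ pointwise (the $\tau^2(\tau-h)^2$ divided-difference remainder), hence integrating over $[0,h]$ and bounding $|\ee^{\ii n_1\omega\tau}|=1$ gives $\mathcal{E}\leq Ch^5$, with $C$ absorbing the uniform bound on $\|F^{(4)}\|$ on $[0,t^\star]$ that follows from Assumption \ref{assumption1} (the $C^4$-regularity of the $\alpha_n$, the smoothing of $\ee^{s\mathcal{L}}$, and boundedness of $u$). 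This step is routine.

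Next, the two oscillatory bounds $\omega^{-3}$ and $h^3\omega^{-2}$: here I would integrate by parts in $\tau$ to exploit the oscillation. Write $G:=F-p$ and note the crucial fact that $G$ and $G'$ both \emph{vanish at the endpoints} $\tau=0$ and $\tau=h$ by the Hermite conditions. Integrating by parts once, $\int_0^h G(\tau)\ee^{\ii n_1\omega\tau}\D\tau = \left[\frac{G(\tau)\ee^{\ii n_1\omega\tau}}{\ii n_1\omega}\right]_0^h - \frac{1}{\ii n_1\omega}\int_0^h G'(\tau)\ee^{\ii n_1\omega\tau}\D\tau$; the boundary term dies because $G(0)=G(h)=0$. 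Integrating by parts a second time, the next boundary term dies because $G'(0)=G'(h)=0$, leaving $\mathcal{E}=\frac{1}{\omega^2}\left\|\int_0^h G''(\tau)\ee^{\ii n_1\omega\tau}\D\tau\right\|$ (absorbing $|n_1|^{-2}\leq1$ into $C$). Now there are two ways to finish: (a) bound $\|G''(\tau)\|$ by a constant — from $\|F''\|_\infty$ plus the fact that the Hermite interpolant's second derivative is controlled by divided differences of $F$, all uniformly bounded — and integrate to get the factor $h$, yielding $\mathcal{E}\leq Ch\omega^{-2}$; wait, this gives $h\omega^{-2}$, not $h^3\omega^{-2}$, so instead (a$'$) I would use that $G''$ still satisfies $\int_0^h G''(\tau)\,\D\tau = G'(h)-G'(0)=0$ together with a Taylor/interpolation argument showing $\|G''(\tau)\|\leq Ch^2\|F^{(4)}\|_\infty$ on $[0,h]$ (since $G''$ is the second derivative of a quartic-order remainder), giving $\mathcal{E}\leq C h^3\omega^{-2}$; and (b) for the pure $\omega^{-3}$ bound, integrate by parts a third time — but $G''$ need not vanish at the endpoints, so the boundary term contributes $\frac{1}{|n_1\omega|^3}(\|G''(h)\|+\|G''(0)\|)\leq C\omega^{-3}$, and the remaining integral of $G'''$ contributes $\frac{h}{|n_1\omega|^3}\|G'''\|_\infty\leq C h\omega^{-3}\leq C\omega^{-3}$ (using $h\leq t^\star$). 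Combining (a$'$) and (b) with the non-oscillatory bound gives the claimed minimum.

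The main obstacle I anticipate is the bookkeeping for the $h^3\omega^{-2}$ term: one must show that $\|G''(\tau)\|=\|(F-p)''(\tau)\|$ is $O(h^2)$ uniformly on $[0,h]$, which requires a slightly careful interpolation-remainder argument for the second derivative of the cubic Hermite interpolant (e.g. writing $(F-p)''$ via the Peano kernel for the second derivative, or differentiating the remainder formula $F(\tau)-p(\tau)=\frac{\tau^2(\tau-h)^2}{4!}F^{(4)}(\eta_\tau)$ carefully with respect to $\tau$, handling the $\tau$-dependence of $\eta_\tau$ — cleaner to use the divided-difference representation $F(\tau)-p(\tau)=\tau^2(\tau-h)^2\,F[0,0,h,h,\tau]$ and differentiate twice, since divided differences of a $C^4$ function are $C^2$ in the free node). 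Everything else is either standard interpolation theory or routine integration by parts, and the constant $C$ is uniform in $h$ and $\omega$ precisely because all the $F$-derivatives involved are bounded on $[0,t^\star]$ by Assumption \ref{assumption1} and the semigroup bound.
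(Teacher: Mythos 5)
Your proposal is correct and follows essentially the same route as the paper: the $h^5$ bound from the pointwise Hermite remainder $\|F-p\|\leq Ch^4$, and the $h^3\omega^{-2}$ bound from two integrations by parts (boundary terms vanishing by the Hermite conditions) combined with $\|(F-p)''\|\leq Ch^2$. The only difference is that you obtain the $\omega^{-3}$ bound directly by a third integration by parts, whereas the paper simply cites the standard Filon error results of Iserles--N{\o}rsett; your version is more self-contained, and the only point to watch is that the $\|(F-p)''\|\leq Ch^2$ estimate is cleanest via Taylor expansion with integral remainder of the Hermite coefficients (as the paper does), since differentiating the divided difference $F[0,0,h,h,\tau]$ twice in $\tau$ would nominally ask for more than $C^4$ regularity.
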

\begin{proof}
   The estimation that the error is bounded by $C\omega^{-3}$ directly follows from well-known results concerning Filon quadrature, as described in \cite{IN_2006}.
     By using the Taylor series with the remainder in integral form, one can show that $\|F(\tau)-p(\tau)\|\leq C h^4$ and $\|F''(\tau)-p''(\tau)\|\leq C h^2$. Therefore, by using integration by parts, we have
    \begin{eqnarray*}
     \left\|\int_0^h (F(\tau)-p(\tau))\ee^{\ii n_1 \omega \tau}\D \tau \right\| = \frac{1}{(n_1 \omega)^2}\left\|\int_0^h (F''(\tau)-p''(\tau))\ee^{\ii n_1\omega \tau}\D \tau \right\|  \leq C\frac{h^3}{\omega^2},
\end{eqnarray*}
which completes the proof.
\end{proof}
\begin{lemma}\label{lemma2}
     Let $F(\tau_1,\tau_2)$ be a vector-valued function of class $C^2$  and $p(\tau_1,\tau_2)$ be a linear function that satisfies the  conditions: $p(0,0)=F(0,0)$, $p(0,h)=F(0,h)$, $p(h,h)=F(h,h)$. Let numbers $n_1>0$, $n_2>0$. Then 
     \begin{eqnarray*}
    \left\|\int_0^h\int_0^{\tau_2}(F-p)(\tau_1,\tau_2)\ee^{\ii\omega(n_1 \tau_1+n_2\tau_2)}\D\tau_1\D\tau_2\right\| \leq   C\min\left\{h^4, \frac{h^2}{\omega^2}, \frac{1}{\omega^3}\right\}.
     \end{eqnarray*}
\end{lemma}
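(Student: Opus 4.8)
The plan is to mirror the structure of the proof of Lemma \ref{lemma1}, establishing the bound as the minimum of three separate estimates, each valid in a different asymptotic regime. First I would prove the $h^4$ bound: since $p$ is the linear interpolant of $F$ at the three vertices of $\sigma_2(h)$ and $F\in C^2$, a standard Taylor-with-integral-remainder argument on the simplex gives $\|(F-p)(\tau_1,\tau_2)\|\leq C h^2$ uniformly on $\sigma_2(h)$; integrating this crude bound over $\sigma_2(h)$, whose area is $h^2/2$, yields $\|\int_{\sigma_2(h)}(F-p)\ee^{\ii\omega(n_1\tau_1+n_2\tau_2)}\|\leq C h^4$. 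This estimate is $\omega$-independent and dominates for small $h\omega$.

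Next I would prove the $\omega^{-3}$ bound, which is the genuinely oscillatory regime. This is where the nonresonance condition $n_1>0$, $n_2>0$ is essential: the phase $g(\tau_1,\tau_2)=n_1\tau_1+n_2\tau_2$ has non-vanishing gradient on $\sigma_2(h)$, so iterated integration by parts (equivalently, the asymptotic-expansion machinery for simplex integrals from \cite{IN_2006}, adapted to vector-valued $F$ as remarked in the text) applies. Since $p$ reproduces $F$ at all vertices, the leading $\mathcal{O}(\omega^{-2})$ boundary contributions of the asymptotic expansions of $\int_{\sigma_2(h)}F\ee^{\ii\omega g}$ and $\int_{\sigma_2(h)}p\ee^{\ii\omega g}$ cancel exactly, and one is left with the first uncancelled term, which is $\mathcal{O}(\omega^{-3})$; here one needs $F-p$ and its first derivatives to be $\mathcal{O}(1)$, guaranteed by $F\in C^2$. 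I would cite \cite{IN_2006} for this step just as Lemma \ref{lemma1} does, spelling out only that matching at the vertices kills the $\omega^{-2}$ term.

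The $h^2\omega^{-2}$ bound is the interpolating regime between the two and is obtained by combining the mechanisms of the other two. I would integrate by parts once in, say, the $\tau_1$ variable to extract a factor $(n_1\omega)^{-1}$, producing boundary terms on the edges $\tau_1=0$ and $\tau_1=\tau_2$ together with an interior integral of $\partial_{\tau_1}(F-p)$; because $p$ matches $F$ at the vertices, the one-dimensional boundary integrands vanish at the endpoints of those edges, so a further integration by parts in $\tau_2$ on the edge terms gains another $(n_2\omega)^{-1}$, and on each resulting lower-dimensional piece the relevant difference of $F$ (or its derivative) minus that of $p$ is $\mathcal{O}(h^2)$ by Taylor expansion while the domain of integration has length $\mathcal{O}(h)$; the remaining interior term is handled by the same two integrations by parts. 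Collecting factors gives $C h^2\omega^{-2}$. The main obstacle is bookkeeping: correctly tracking all the boundary contributions generated by integration by parts over a simplex (rather than a box) and verifying that the Hermite/vertex matching conditions are exactly what is needed to annihilate the dangerous low-order terms in both the $\omega^{-3}$ and $h^2\omega^{-2}$ estimates — the analytic inputs themselves (Taylor remainder bounds, the $L^2$-boundedness of $\ee^{t\mathcal{L}}$ from Assumption \ref{assumption1}) are routine.
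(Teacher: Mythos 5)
Your proposal is correct and follows essentially the same route as the paper: Taylor remainder bounds $\|F-p\|\leq Ch^2$ and $\|\partial_{\tau_1}(F-p)\|\leq Ch$ combined with iterated integration by parts over the simplex, the vertex-interpolation conditions annihilating the $\mathcal{O}(\omega^{-1})$ edge terms so that a further integration by parts in $\tau_2$ yields the $Ch^2\omega^{-2}$ bound, and the $\mathcal{O}(\omega^{-3})$ estimate taken from \cite{IN_2006} via the nonresonance of $(n_1,n_2)$. The only slip is cosmetic: on the edge terms the quantity gaining the extra $\omega^{-1}$ is $\partial_{\tau_2}(F-p)=\mathcal{O}(h)$ integrated over a length-$h$ interval (not a difference of size $\mathcal{O}(h^2)$), which still delivers the stated $Ch^2\omega^{-2}$.
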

 \begin{proof}
Since vector $(n_1,n_2)$ satisfies the nonresonance condition, the approximated integral $I[F,\sigma_2(h)]$ can be expanded asymptotically $I[F,\sigma_2(h)]\sim \mathcal{O}(\omega^{-2})$, and therefore the Filon method provides that the error satisfy $I[(F-p),\sigma_2(h)]=\mathcal{O}(\omega^{-3})$ \cite{IN_2006}. As in the case in the proof of Lemma \ref{lemma1}, by using the Taylor series with the remainder in  integral form, we have the estimations $\|F(\tau_1,\tau_2)-p(\tau_1,\tau_2)\|\leq C h^2$ and $\|\partial_{\tau_1}^1\left(F(\tau_1,\tau_2)-p(\tau_1,\tau_2)\right)\|\leq C h$. 
For simplicity, let us assume that $n_1=n_2=1$. Using  integration by parts, we get
 \begin{eqnarray*}
    \left\|\int_0^h\int_0^{\tau_2}(F-p)(\tau_1,\tau_2)\ee^{\ii\omega( \tau_1+\tau_2)}\D\tau_1\D\tau_2\right\| &\leq&  \frac{1}{\omega}\left\|\int_0^h  (F-p)(\tau_2,\tau_2)\ee^{2\ii\omega\tau_2}-(F-p)(0,\tau_2)\ee^{\ii\omega\tau_2}\D\tau_2\right\|\\
    &+& \frac{1}{\omega^2}\left\|\int_0^h  \partial_{\tau_1}^1(F-p)(\tau_2,\tau_2)\ee^{2\ii\omega\tau_2} -\partial_{\tau_1}^1(F-p)(0,\tau_2)\ee^{\ii\omega\tau_2}\D\tau_2\right\|\\
    &+&\frac{1}{\omega^2}\left\|\int_0^h\int_0^{\tau_2}\partial_{\tau_1}^2(F-p)(\tau_1,\tau_2)\ee^{\ii\omega( \tau_1+\tau_2)}\D\tau_1\D\tau_2\right\|.
     \end{eqnarray*}
     The second and third term on the right side of the above inequality are bounded by $Ch^2\omega^{-2}$, where $C$ is some constant independent of $h$ and $\omega$.
     In the case of the first expression, we again apply integration by parts and the definition of the polynomial $p$, and thus get the following
     \begin{eqnarray*}
         &&\frac{1}{\omega}\left\|\int_0^h  (F-p)(\tau_2,\tau_2)\ee^{2\ii\omega\tau_2}-(F-p)(0,\tau_2)\ee^{\ii\omega\tau_2}\D\tau_2\right\|\leq\\
         &&\frac{1}{2\omega^2}\left\|\int_0^h\partial_{\tau_2}^1(F-p)(\tau_2,\tau_2)\ee^{2\ii\omega\tau_2}\D\tau_2\right\|
         +\frac{1}{\omega^2}\left\|\int_0^h\partial_{\tau_2}^1(F-p)(0,\tau_2)\ee^{\ii\omega\tau_2}\D\tau_2\right\|
         \leq C\frac{h^2}{\omega^2},
     \end{eqnarray*}
     which concludes the proof.

\end{proof}
\noindent In a similar vein, we estimate the error of the Filon method for the triple integral
\begin{lemma}\label{lemma3}
    Let $F(\tau_1,\tau_2,\tau_3)$ be a vector valued function of class $C^2$ and $p(\tau_1,\tau_2,\tau_3)$  be a linear function approximating $F$ such that $p(0,0,0)=F(0,0,0)$, $p(0,0,h)=F(0,0,h)$, $p(0,h,h)=F(0,h,h)$, $p(h,h,h)=F(h,h,h)$. Let numbers $n_1,n_2,n_3>0$.
    Then the error of the Filon method can be estimated as follows
    \begin{eqnarray*}
    \left\|\int_0^h \int_0^{\tau_3}\int_0^{\tau_2}(F-p)(\tau_1,\tau_2,\tau_3)\ee^{\ii\omega(n_1\tau_1+n_2\tau_2+n_3\tau_3)} \D\tau_1\D\tau_2\D\tau_3\right\|\leq C\min\left\{h^5,\frac{h^3}{\omega^2},\frac{1}{\omega^4}\right\}.
\end{eqnarray*}
\end{lemma}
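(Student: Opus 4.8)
The plan is to mimic the two-dimensional argument of Lemma~\ref{lemma2}, reducing the triple integral over the simplex $\sigma_3(h)$ to lower-dimensional boundary integrals by repeated integration by parts in $\tau_1$, and then iterating the procedure in $\tau_2$ on the surviving surface integrals. First I would record, exactly as in the proofs of Lemmas~\ref{lemma1} and~\ref{lemma2}, the pointwise interpolation bounds obtained from the Taylor remainder in integral form: since $F$ is of class $C^2$ and $p$ is affine, $\|(F-p)(\tau_1,\tau_2,\tau_3)\| \leq C h^2$ and $\|\partial_{\tau_1}(F-p)(\tau_1,\tau_2,\tau_3)\| \leq C h$ uniformly on $\sigma_3(h)$, and likewise for the first partials in $\tau_2$ and $\tau_3$; also $\partial_{\tau_1}^2(F-p) = \partial_{\tau_1}^2 F = \mathcal{O}(1)$. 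The $\mathcal{O}(\omega^{-4})$ bound is immediate: $(n_1,n_2,n_3)$ with all $n_i>0$ satisfies the nonresonance condition over $\sigma_3(h)$, so by the asymptotic expansion result cited from \cite{IN_2006} the integral $I[F,\sigma_3(h)]$ expands as $\mathcal{O}(\omega^{-3})$ and the Filon error with a linear interpolant (matching at all vertices) is one order better, i.e. $\mathcal{O}(\omega^{-4})$.

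For the remaining two bounds, I would integrate by parts once in $\tau_1$. This produces, up to harmless constants $1/(n_1\omega)$: a surface term on $\{\tau_1=\tau_2\}$ carrying the factor $\ee^{\ii\omega((n_1+n_2)\tau_2+n_3\tau_3)}$, a surface term on $\{\tau_1=0\}$ carrying $\ee^{\ii\omega(n_2\tau_2+n_3\tau_3)}$, and a volume term with $\partial_{\tau_1}(F-p)$. Integrating the volume term by parts in $\tau_1$ a second time yields a $1/\omega^2$ prefactor multiplying (two more surface terms plus) a volume integral of $\partial_{\tau_1}^2(F-p)\cdot\ee^{\ii\omega(\cdots)}$; using $\|\partial_{\tau_1}^2(F-p)\|\leq C$ and the fact that $\mathrm{vol}(\sigma_3(h)) = h^3/6$ gives $C h^3 \omega^{-2}$, and crudely $C\omega^{-2}$ when $h$ is not small — but actually that term is $\le C\min\{h^3\omega^{-2},\omega^{-2}\}$ which I will want to improve to $\omega^{-3}$; I can get the extra power by one further integration by parts in $\tau_2$ on that volume term (now $\partial_{\tau_1}^2\partial_{\tau_2}(F-p)=\mathcal{O}(1)$ via the same Taylor argument applied to $\partial_{\tau_1}^2 F$, since $p$ contributes nothing at this order), at the price of $1/\omega^3$. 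The surface terms at level $1/\omega^2$ (those with $\partial_{\tau_1}(F-p)$ on $\{\tau_1=0\}$ and $\{\tau_1=\tau_2\}$) are two-dimensional integrals over the triangle $\{0\le\tau_2\le\tau_3\le h\}$ whose integrand has norm $\le Ch$; integrating each of these by parts once more in $\tau_2$ (and then in $\tau_3$ if needed) lowers the integrand bound to $\mathcal{O}(1)$ against the oscillation $\ee^{\ii\omega(\cdots)}$, yielding $C h^3\omega^{-3}$ or better — comfortably inside the claimed estimate.

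The genuinely delicate terms are the two $1/\omega$ surface integrals $\frac{1}{\omega}\int (F-p)(\tau_2,\tau_2,\tau_3)\,\ee^{\ii\omega((n_1+n_2)\tau_2+n_3\tau_3)}$ and $\frac{1}{\omega}\int (F-p)(0,\tau_2,\tau_3)\,\ee^{\ii\omega(n_2\tau_2+n_3\tau_3)}$: here $\|F-p\|\le Ch^2$ only gives $Ch^4\omega^{-1}$, which is not small enough. The remedy, exactly as in Lemma~\ref{lemma2}, is to integrate each of these by parts once more in $\tau_2$ (the phases $(n_1+n_2)\tau_2+n_3\tau_3$ and $n_2\tau_2+n_3\tau_3$ are both non-stationary in $\tau_2$ since $n_1+n_2>0$ and $n_2>0$), gaining a $1/\omega$ and producing boundary terms on $\{\tau_2=\tau_3\}$ and $\{\tau_2=0\}$ plus a volume term with $\partial_{\tau_2}(F-p)=\mathcal{O}(h)$. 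The volume term is $\le Ch\cdot h^2\cdot\omega^{-2}=Ch^3\omega^{-2}$, and the new boundary terms, now one-dimensional integrals in $\tau_3$ with integrand $\le Ch^2$, are integrated by parts a final time in $\tau_3$ (phase coefficient $n_1+n_2+n_3>0$, resp. $n_2+n_3>0$ or $n_3>0$ depending on the face — all nonzero) to produce $Ch^2\omega^{-3}\le C\omega^{-3}$ and, since the region has length $h$, also $\le Ch^3\omega^{-2}$. Collecting everything, every term is bounded by $C\min\{h^3\omega^{-2},\omega^{-3}\}$. Finally the $h^5$ bound follows from the non-oscillatory estimate: without any integration by parts, $\|I[(F-p),\sigma_3(h)]\|\le \sup\|F-p\|\cdot\mathrm{vol}(\sigma_3(h))\le Ch^2\cdot h^3/6 = Ch^5$. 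Taking the minimum of $h^5$, $h^3\omega^{-2}$, and $\omega^{-3}$ — noting $\min\{h^3\omega^{-2},\omega^{-3}\}$ absorbs into $\min\{h^5,h^3\omega^{-2},\omega^{-3}\}$ after comparing $h^5$ with $h^3\omega^{-2}$ in the two regimes $h\omega\lessgtr 1$ — gives the asserted bound. The main obstacle is purely bookkeeping: keeping track of which boundary faces appear after each integration by parts and verifying that every phase that arises has a nonvanishing $\tau_j$-coefficient, which is where the hypothesis $n_1,n_2,n_3>0$ is used decisively.
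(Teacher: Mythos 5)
Your proposal is correct and takes essentially the approach the paper intends for this lemma, whose proof it omits with the remark that it proceeds ``in a similar vein'' to Lemmas \ref{lemma1} and \ref{lemma2}: the $\omega^{-4}$ bound from the nonresonance condition and vertex interpolation via \cite{IN_2006}, the $h^{3}\omega^{-2}$ bound by iterated integration by parts over the faces of $\sigma_3(h)$ combined with the Taylor estimates $\|F-p\|\leq Ch^{2}$ and $\|\partial_{\tau_j}(F-p)\|\leq Ch$, and the $h^{5}$ bound from the trivial volume estimate. (The one optional detour invoking $\partial_{\tau_1}^{2}\partial_{\tau_2}F$ exceeds the stated $C^{2}$ regularity, but it is superfluous because the $\omega^{-4}$ bound is already secured by the cited asymptotic expansion.)
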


To complete the analysis of the local error, we  need to estimate the truncation error of the Neumann series. We write the solution of (\ref{eq_duhamel}) as
\begin{eqnarray*}
    u[t](h) &=& \underbrace{\sum_{d=0}^{r}T^d_t\ee^{h\mathcal{L}}u[t](0)}_{=:u^{[r]}[t](h)}+ \underbrace{\sum_{d=r+1}^{\infty}T^d_t\ee^{h\mathcal{L}}u[t](0)}_{=:\mathcal{R}^{[r+1]}[t](h)} = u^{[r]}[t](h) + \mathcal{R}^{[r+1]}[t](h),
\end{eqnarray*}
where, in our considerations, we take  $r=3$.
\begin{lemma}\label{lemma4}
Let the function $f$ from equation (\ref{eq_duhamel}) be of the form (\ref{positive_frequencies}).
Then
    the remainder $\mathcal{R}^{[4]}[t](h)$ of the Neumann series (\ref{Neumann_series}) satisfied the following estimate
    \begin{eqnarray*}
        \|\mathcal{R}^{[4]}[t](h)\| \leq C\min\left\{h^{4},\frac{h^{2}}{\omega^{2}},\frac{1}{\omega^4}\right\},
    \end{eqnarray*}
    where constant $C$ depends on functions  $\alpha_n$, $u[t](0)$ their derivatives, solution $u$, operator $\mathcal{L}$ and $t^\star$, but is independent of time step $h$ and parameter $\omega$.
\end{lemma}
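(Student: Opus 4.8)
The plan is to estimate each term $T^d_t\ee^{h\mathcal{L}}u[t](0)$ for $d\geq 4$ in two complementary ways and then sum the resulting geometric series. First I would establish the "$h$-side'' bound: since $\mathcal{L}$ generates a strongly continuous semigroup with $\max_{s\in[0,t^\star]}\|\ee^{s\mathcal{L}}\|\leq C(t^\star)$ and since $\|f(s)u(s)\|\leq \|f(s)\|_\infty\|u(s)\|$ with $\|f(s)\|_\infty$ finite by the Sobolev embedding (as noted in the text), the operator $T_t$ satisfies $\|T_t v[t](s)\|\leq s\,C(t^\star)\,M\,\max_{0\leq\tau\leq s}\|v[t](\tau)\|$, where $M=\max_s\|f(s)\|_\infty$. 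Iterating over the simplex $\sigma_d(h)$ gives the volume factor $h^d/d!$, so $\|T^d_t\ee^{h\mathcal{L}}u[t](0)\|\leq C(t^\star)^{d+1}M^d\,\dfrac{h^d}{d!}\,\|u[t](0)\|$. Summing from $d=4$ to $\infty$ yields a convergent series whose leading term is $O(h^4)$, giving $\|\mathcal{R}^{[4]}[t](h)\|\leq Ch^4$.

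Next I would establish the "$\omega$-side'' bounds. Here the key input is the asymptotic-expansion / nonresonance machinery recalled just before Lemma~\ref{lemma1}: because $f$ has only positive frequencies (form (\ref{positive_frequencies})), every multi-index $\bm{n}\in\bm{N}^d$ has $\bm{n}^T\bm{\tau}$ with strictly positive coefficients, so each oscillatory integral $I[F_{\bm{n}},\sigma_d(h)]$ satisfies the nonresonance condition and hence $\|I[F_{\bm{n}},\sigma_d(h)]\|=O(\omega^{-d})$, with a constant controlled by finitely many derivatives of $F_{\bm{n}}$ on the closed simplex. Each such derivative is, via the product rule, a finite sum of terms built from $\ee^{(\cdot)\mathcal{L}}$ factors (uniformly bounded), derivatives of the $\alpha_n$ (finite by Assumption~\ref{assumption1}, at least for the low orders needed), and $u[t](0)$; I would track that the bound is $\leq C\,\tilde M^d\,\omega^{-d}\|u[t](0)\|$ for a constant $\tilde M$ independent of $h,\omega,d$. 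Since there are $(2N)^d$ multi-indices in $\bm{N}^d$, summing over $\bm{n}$ and then over $d\geq 4$ gives $\|\mathcal{R}^{[4]}[t](h)\|\leq C\sum_{d\geq 4}(2N\tilde M/\omega)^d=O(\omega^{-4})$ for $\omega$ large. For the intermediate bound $h^2\omega^{-2}$, I would interpolate: integrate by parts twice in the outermost variable of the simplex integral (exactly as in the proofs of Lemmas~\ref{lemma1}--\ref{lemma3}), which extracts a factor $\omega^{-2}$ while the remaining simplex integral of a second derivative still carries its volume factor; combined with the volume bound this produces $\leq C\tilde M^d h^{d-2}\omega^{-2}/(d-2)!$ per term, and summing over $\bm{n}$ and $d\geq 4$ gives the $h^2\omega^{-2}$ rate. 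Taking the minimum of the three bounds finishes the proof.

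The main obstacle I expect is the bookkeeping in the $\omega$-side estimate: one must verify that the constants appearing in the nonresonance/Filon-type bound for $I[F_{\bm{n}},\sigma_d(h)]$ are uniform in $d$ and grow at most geometrically, because the derivatives of $F_{\bm{n}}$ that control those constants are products of $d$ factors, and one needs the product-rule expansion to be dominated by $\tilde M^d$ rather than something like $d!$. This is where the uniform semigroup bound $C(t^\star)$ and the uniform sup-norm bounds on $\alpha_n$ and their low-order time derivatives (Assumption~\ref{assumption1}, part 3, together with Sobolev embedding since $2p>m/2$) are essential: they keep each of the $d$ factors bounded by a single constant, so the $d$-fold product is bounded by that constant to the $d$-th power, and the geometric series over $d$ converges once $\omega$ (resp.\ $1/h$) is large enough relative to $2N\tilde M$.
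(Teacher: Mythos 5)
Your proof is essentially sound, but it takes a genuinely different route from the paper's for the tail of the series. You estimate every term $T^d_t\ee^{h\mathcal{L}}u[t](0)$, $d\geq 4$, individually (volume of $\sigma_d(h)$ for the $h$-side; nonresonant asymptotics giving $\mathcal{O}(\omega^{-d})$ per term for the $\omega$-side) and then sum over $d$. The paper instead uses the algebraic identity $\mathcal{R}^{[4]}[t](h)=\sum_{d\geq4}T_t^d\ee^{h\mathcal{L}}u[t](0)=\bm{T}_tT_t^4\ee^{h\mathcal{L}}u[t](0)$ with $\bm{T}_t=\sum_{d\geq0}T_t^d$ (equivalently, $T_t^4$ applied to the exact solution $u[t](h)$ for the $h^4$ bound), shows $\bm{T}_t$ is a bounded operator via Gr\"onwall's inequality applied to the integral equation it solves, and then only has to prove that the \emph{single} term $T_t^4\ee^{h\mathcal{L}}u[t](0)$ is $\mathcal{O}(\omega^{-4})$ and $\mathcal{O}(h^2\omega^{-2})$. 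This factorization buys exactly what you identify as your main obstacle: it removes any need to track constants uniformly in $d$ or to verify that they grow at most geometrically, and it avoids the requirement $\omega>2N\tilde M$ that your geometric series $\sum_{d\geq4}(2N\tilde M/\omega)^d$ imposes (your constant $C$ would otherwise degenerate for moderate $\omega$, whereas the lemma asserts a bound with $C$ independent of $\omega$). Your argument is salvageable as written for the regime $\omega\gg1$, and your $h^2\omega^{-2}$ interpolation (two integrations by parts in the outer variable plus the volume factor) mirrors the paper's, which peels off the two outermost integrals for the factor $h^2$ and uses $\|T^2\ee^{\tau_3\mathcal{L}}u[t](0)\|=\mathcal{O}(\omega^{-2})$ for the inner part. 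If you want a fully uniform constant, adopt the factorization through $\bm{T}_t$; everything else in your outline then goes through with only the $d=4$ term to estimate.
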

\begin{proof}
By using the basic properties of the operator norm and the fact that $\|f\|_{\infty}<\infty$ we have 
    \begin{eqnarray*}
 \left\|\mathcal{R}^{[4]}[t](h)\right\| &=& \left\|\sum_{d=4}^{\infty}T^d_t\ee^{h\mathcal{L}}u[t](0) \right\|=\left\|T_t^{4}\sum_{d=0}^{\infty}T^d_t\ee^{h\mathcal{L}}u[t](0)\right\| = \|T_t^4u[t](h)\|\leq C h^{4}\sup_{s\in[0,h]}\|u[t](s)\|. 
    \end{eqnarray*}
Let us now denote by $\bm{T}_t$ the operator $\bm{T}_t = \sum_{d=0}^{\infty}T_t^{d}$.
On the other hand, we estimate
\begin{eqnarray*}
\left\|\mathcal{R}^{[4]}[t](h)\right\| &=& \left\|\sum_{d=4}^{\infty}T^d_t\ee^{h\mathcal{L}}u[t](0) \right\| = \left\|\sum_{d=0}^{\infty}T^d_tT^{4}_t\ee^{h\mathcal{L}}u[t] (0)\right\| =  \left\|\mathbf{T}_tT^{4}_t\ee^{h\mathcal{L}}u[t] (0)\right\| \leq \\
&\leq&\sup_{\|v[t](h)\|\leq 1}\|\mathbf{T}_t v[t](h)\| \|T_t^{4}\ee^{h\mathcal{L}}u[t](0)\|.
\end{eqnarray*}
Expression $T_t^{4}\ee^{h\mathcal{L}}u[t](0)$ is a sum of highly oscillatory integrals over a $4$-dimensional simplex which satisfy the nonresonance condition and therefore  $\|T_t^{4}\ee^{h\mathcal{L}}u[t](0)\| = \mathcal{O}(\omega^{-4})$.
 In addition, by using basic properties of the operator norm and the simple inequality $\|uv\|_{L^2} \leq \|u\|_{L^2}\|v\|_{\infty}$, we have
 \begin{eqnarray*}
     \|T^4_t\ee^{h\mathcal{L}}u[t](0)\|&=&
\left\|\int_0^h\ee^{(h-\tau_4)\mathcal{L}}f[t](\tau_4)\int_0^{\tau_4}\ee^{(\tau_4-\tau_{3})\mathcal{L}}f[t](\tau_{3})T^2\ee^{\tau_3\mathcal{L}}u[t](0)\D\tau_3\D\tau_4\right\|\\
&\leq&\int_0^h\left\|\ee^{(h-\tau_4)\mathcal{L}}f[t](\tau_4)\int_0^{\tau_4}\ee^{(\tau_4-\tau_{3})\mathcal{L}}f[t](\tau_{3})T^2\ee^{\tau_3\mathcal{L}}u[t](0)\D\tau_3\right\|\D\tau_4\\
&\leq&C_1\int_0^h\left\|\int_0^{\tau_4}\ee^{(\tau_4-\tau_{3})\mathcal{L}}f[t](\tau_{3})T^2\ee^{\tau_3\mathcal{L}}u[t](0)\D\tau_3\right\|\D\tau_4\\
&\leq&C_1\int_0^h\int_0^{\tau_4}\left\|\ee^{(\tau_4-\tau_{3})\mathcal{L}}f[t](\tau_{3})T^2\ee^{\tau_3\mathcal{L}}u[t](0)\right\|\D\tau_3\D\tau_4\\
&\leq&C_1^2\int_0^h\int_0^{\tau_4}\left\|T^2\ee^{\tau_3\mathcal{L}}u[t](0)\right\|\D\tau_3\D\tau_4,
 \end{eqnarray*}
 where the constant $C_1>0$ depends on the norm of the semigroup operator $\{\ee^{t\mathcal{L}}\}_{t\in [0,t^\star]}$ and the supremum norm of the function $f$.
 Since term $T^2\ee^{\tau_3\mathcal{L}}u[t](0)$ satisfies  $\left\|T^2\ee^{\tau_3\mathcal{L}}u[t](0)\right\|=\mathcal{O}(\omega^{-2})$, we obtain the estimate
 $$\|T^4_t\ee^{h\mathcal{L}}u[t](0)\|\leq C\frac{h^2}{\omega^2}.$$

\noindent Moreover, it can be observed that    expression $\bm{T}_tv[t](h)$, where $\|v[t](h)\|_2\leq 1$ is the solution of the integral equation
\begin{eqnarray*}\label{Series_operator}
    \psi[t](h)=v[t](h)+\int_0^h \ee^{(h-\tau)\mathcal{L}}f[t](\tau)\psi[t](\tau)\D\tau.
\end{eqnarray*}
  By  Gr\"{o}nwall's inequality, expression  $\psi=\bm{T}_tv[t](h)$ is also bounded  in $L^2$ norm for any function $v[t](h)$ such that $\|v[t](h)\|_2\leq 1$.
Using the boundedness of operator $\bm{T}_t$, we can estimate 
\begin{eqnarray*}
\left\|\sum_{d=4}^{\infty}T^d_t\ee^{h\mathcal{L}}u[t](0) \right\| \leq C \min\left\{h^{4},\textcolor{blue}{\frac{h^{2}}{\omega^{2}}},\frac{1}{\omega^4}\right\}.
\end{eqnarray*}
which completes the proof.
\end{proof}
Let us emphasize that the time derivatives of the solution of the highly oscillatory equation (\ref{eq_duhamel}) do not appear in the above estimates,  which means that the constant $C$ is independent of the parameter $\omega$.

By collecting the estimations of integrals presented in Lemmas \ref{lemma1}, \ref{lemma2}, \ref{lemma3}, and estimation of the remainder of the Neumann series in Lemma \ref{lemma4}, one can provide the following local error bound of the scheme.
\begin{theorem}
   Let Assumption \ref{assumption1} be satisfied and let the potential function $f$ be of the form (\ref{positive_frequencies}). Then the local error of the numerical scheme (\ref{scheme}) satisfies the following estimate in the $L^2$ norm
    \begin{eqnarray*}
        \|u(t_0+h)-u^1\| \leq   C\min\left\{h^4, \frac{h^2}{\omega^2}, \frac{1}{\omega^3}\right\},
    \end{eqnarray*}
    where constant $C$    is independent of time step $h$ and parameter $\omega$.
\end{theorem}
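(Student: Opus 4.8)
The plan is to split the local error $\|u(t_0+h)-u^1\|$ into two contributions -- the truncation error of the Neumann series and the aggregate Filon quadrature error of the retained terms $d=1,2,3$ -- and then to assemble the bounds already established in Lemmas~\ref{lemma1}--\ref{lemma4}. Starting from (\ref{Neumann_integrals}) with $t=t_0=0$, and noting that in the scheme (\ref{scheme}) the term $d=0$ is reproduced exactly while each term $d=1,2,3$ is replaced by its Filon approximation, I would write
\begin{eqnarray*}
u(t_0+h)-u^1 \;=\; \mathcal{R}^{[4]}[t_0](h)\;+\;\sum_{d=1}^{3}\sum_{\bm{n}\in\bm{N}^d}\ee^{\ii\omega t_0\bm{n}^T\bm{1}}\left(I[F_{\bm{n}},\sigma_d(h)]-\int_{\sigma_d(h)}p_{\bm{n}}(\bm{\tau})\ee^{\ii\omega\bm{n}^T\bm{\tau}}\,\D\bm{\tau}\right),
\end{eqnarray*}
where $p_{\bm{n}}$ denotes the Hermite/linear interpolant of $F_{\bm{n}}$ used in (\ref{scheme}); since $|\ee^{\ii\omega t_0\bm{n}^T\bm{1}}|=1$, the phase factors play no role in the estimate.

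Next I would verify that each $F_{\bm{n}}$ from (\ref{function_f_operator}) meets the regularity hypotheses of the corresponding Filon lemma, with constants that do not depend on $\omega$. Differentiating the nested product $\ee^{(h-\tau_d)\mathcal{L}}\alpha_{n_d}[t_0]\cdots\ee^{\tau_1\mathcal{L}}u[t_0](0)$ in the variables $\tau_i$ produces terms in which either a power of $\mathcal{L}$ is composed with one of the semigroup factors or a time derivative falls on one of the $\alpha_{n_i}$; by Assumption~\ref{assumption1} -- which provides $u_0\in D(\mathcal{L}^4)$, $\alpha_n\in C^4([0,t^\star],H^{8p}(\Omega))$ and $2p>m/2$ -- all such quantities are bounded, the condition $2p>m/2$ supplying the Sobolev embedding $H^{2p}(\Omega)\hookrightarrow L^\infty(\Omega)$ needed to estimate the products $\alpha_{n_i}\cdot(\,\cdot\,)$ in $L^2$. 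Hence $F_{n_1}(h,\cdot)\in C^4$ and $F_{\bm{n}}(h,\cdot)\in C^2$ for $\bm{n}\in\bm{N}^2$ and $\bm{n}\in\bm{N}^3$, with the relevant derivative norms bounded by a constant depending only on $\mathcal{L}$, $t^\star$, $u_0$, the $\alpha_n$ and their derivatives -- and, crucially, independent of $\omega$, because $\omega$ enters the integrands only through the exponentials $\ee^{\ii\omega\bm{n}^T\bm{\tau}}$ that the Filon lemmas already absorb. Since $f$ has the form (\ref{positive_frequencies}), every index $n_i$ is positive, so the nonresonance hypotheses of Lemmas~\ref{lemma2} and~\ref{lemma3} hold and those lemmas apply directly.

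With the hypotheses in place I would invoke Lemma~\ref{lemma1} on the $d=1$ terms, Lemma~\ref{lemma2} on the $d=2$ terms, Lemma~\ref{lemma3} on the $d=3$ terms, and Lemma~\ref{lemma4} on $\mathcal{R}^{[4]}[t_0](h)$, obtaining the per-term bounds $C\min\{h^5,\omega^{-3},h^3\omega^{-2}\}$, $C\min\{h^4,h^2\omega^{-2},\omega^{-3}\}$, $C\min\{h^5,h^3\omega^{-2},\omega^{-4}\}$ and $C\min\{h^4,h^2\omega^{-2},\omega^{-4}\}$ respectively. Each inner sum $\sum_{\bm{n}\in\bm{N}^d}$ runs over a fixed finite number of multi-indices, absorbed into the constant. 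Assuming without loss of generality $h\leq 1$ (otherwise the threshold is absorbed into $C$), one has $h^5\leq h^4$ and $h^3\leq h^2$, so the bounds coming from Lemmas~\ref{lemma1}, \ref{lemma3} and~\ref{lemma4} are each dominated by $C\min\{h^4,h^2\omega^{-2},\omega^{-3}\}$ -- precisely the contribution of the $d=2$ Filon error. Adding the finitely many pieces and relabelling the constant yields $\|u(t_0+h)-u^1\|\leq C\min\{h^4,h^2\omega^{-2},\omega^{-3}\}$ with $C$ independent of $h$ and $\omega$.

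I expect the only genuinely delicate step to be the uniform-in-$\omega$ regularity of the functions $F_{\bm{n}}$ claimed in the second paragraph: one must track carefully that differentiating the nested semigroup/potential product never yields a factor growing with $\omega$, and that the domain and smoothness requirements bundled into Assumption~\ref{assumption1} are exactly what keeps the fourth $\tau$-derivative (for $d=1$) and the second $\tau$-derivative (for $d=2,3$) bounded in $L^2(\Omega)$. Once that is secured, the rest of the argument is purely a matter of collecting the four estimates over finitely many indices.
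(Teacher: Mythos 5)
Your proposal is correct and follows essentially the same route as the paper: the authors themselves obtain the theorem precisely by ``collecting the estimations'' of Lemmas~\ref{lemma1}--\ref{lemma3} for the Filon errors of the retained terms $d=1,2,3$ and Lemma~\ref{lemma4} for the Neumann remainder, which is exactly your decomposition. Your added verification that Assumption~\ref{assumption1} supplies the $\omega$-uniform regularity of the $F_{\bm{n}}$ is a useful fleshing-out of a step the paper leaves implicit, and the coordinate-wise combination of the four minima is carried out correctly.
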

\subsection{The  case involving negative frequencies}
The situation becomes more complicated when we perform the error analysis of the proposed numerical integrator for potential function $f$ in the general form (\ref{ideal_function_f}). 
Let $\bm{n}=(n_1,\dots,n_d) \in \bm{N}^d$, where set $\bm{N}^d$ is defined in (\ref{set_N}).
Coordinates of  $\bm{n}$ may satisfy
 $$
 n_j + n_{j-1}+\dots +n_{r+1}+n_r = 0,
 $$
 for certain $1\leq j < r\leq d$, and therefore $\bm{n}$ is orthogonal to the boundary of simplex $\sigma_d(h)$.  Vector $\bm{n}$ does not satisfy the nonresonance condition, and, as a result, simple integration by parts does not yield error estimates similar to those presented in Lemmas \ref{lemma1}, \ref{lemma2}, and \ref{lemma3}.
In this case, we still obtain the fourth-order local error estimate of the numerical scheme
        $\|u(t_0+h)-u^1\| \leq   Ch^4$, where $C$ is independent of $\omega$ and $h$, but we wish to derive a numerical scheme whose accuracy   improves significantly with increasing $\omega$.  

At this stage, we consider two bivariate integrals from the Neumann series,  $ I[F_{\bm{n}_1},\sigma_2(h)]$ and $I[F_{\bm{n}_2},\sigma_2(h)]$, where $\bm{n}_1 = (-n,n)$ and $\bm{n}_2 = (n,-n)$.  Vectors $\bm{n}_1, \bm{n}_2 \in \bm{N}^2$  are orthogonal to the boundary of simplex $\sigma_2(h)$.   By  integration by parts one can show that $I[F_{\bm{n}_1},\sigma_2(h)]\sim \mathcal{O}(\omega^{-1})$, $I[F_{\bm{n}_2},\sigma_2(h)]\sim \mathcal{O}(\omega^{-1})$
but sum of the integrals satisfies $\left(I[F_{\bm{n}_1},\sigma_2(h)] +[F_{\bm{n}_2},\sigma_2(h)]\right) \sim \mathcal{O}(\omega^{-2})$ \cite{perczyński2023asymptotic}. 
We exploit this fact by imposing an additional interpolation condition to construct Filon's quadrature rule for the sum of two bivariate integrals that do not satisfy the nonresonance condition. We also assume that coefficients of function $f$ satisfy $\alpha_{-n}=\alpha_n, \ \forall n\in \bm{N}^1$,  therefore   $F_{\bm{n}_1}= F_{\bm{n}_2}=:F_{\bm{n}}$.

\begin{theorem}
    Let coefficients $\alpha_{-n}=\alpha_n, \ \forall n\in \bm{N}^1$ and consider function $F_{\bm{n}}$ of the form (\ref{function_f_operator}), i.e. $F_{\bm{n}}(\tau_1,\tau_2)= \ee^{(h-\tau_2)\mathcal{L}}\alpha_n[t](\tau_2)\ee^{(\tau_2-\tau_1)\mathcal{L}}\alpha_n[t](\tau_1)\ee^{\tau_1\mathcal{L}}u[t](0)$. Let polynomial 
    $p(\tau_1,\tau_2) = b_0+ b_1\tau_1+b_2\tau_2+ b_3\tau_1\tau_2$
    satisfies the following interpolation conditions
    $$p(0,0) = F_{\bm{n }}(0,0), \quad p(0,h) = F_{\bm{n }}(0,h),\quad p(h,h) = F_{\bm{n }}(h,h),$$
    and
    \begin{eqnarray}\label{new_condition}
    \int_0^h \partial_{\tau_1}^1p(\tau_2,\tau_2)\D\tau_2 =\int_0^h \partial_{\tau_1}^1F_{\bm{n}}(\tau_2,\tau_2)\D\tau_2.
    \end{eqnarray}
    Then 
$$
\left\|\int_0^h\int_0^{\tau_2}(F_{\bm{n}}-p)(\tau_1,\tau_2)\ee^{\ii\omega n( \tau_1-\tau_2)}+(F_{\bm{n}}-p)(\tau_1,\tau_2)\ee^{\ii\omega n(- \tau_1+\tau_2)}\D\tau_1\D\tau_2\right\| \leq   C\min\left\{h^4,\frac{h^2}{\omega^2}, \frac{1}{\omega^3}\right\}.$$
\end{theorem}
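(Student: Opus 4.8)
The plan is to work throughout with the single error function $g:=F_{\bm{n}}-p$, whose two decisive properties are that it vanishes at the three vertices $(0,0)$, $(0,h)$, $(h,h)$ of $\sigma_2(h)$ and that condition (\ref{new_condition}) says exactly $\int_0^h\partial_{\tau_1}g(\tau_2,\tau_2)\,\D\tau_2=0$. First I would record that the four interpolation conditions determine the bilinear $p$ uniquely (the $2\times 2$ linear system for the coefficients of $\tau_1$ and $\tau_1\tau_2$ has determinant $-h/2$) and that $p$ reproduces every affine function of $(\tau_1,\tau_2)$; writing $F_{\bm{n}}=L+R$ with $L$ its first-order Taylor polynomial at the origin, one has $p[F_{\bm{n}}]=L+p[R]$, and since by the regularity in Assumption~\ref{assumption1} the map $F_{\bm{n}}$ is a $C^3$, $L^2(\Omega)$-valued function with all derivatives bounded \emph{independently of $\omega$}, estimating $R$ and $p[R]$ gives $\sup_{\sigma_2(h)}\|g\|\le Ch^2$, $\sup_{\sigma_2(h)}\|\partial_{\tau_1}g\|\le Ch$, $\sup_{\sigma_2(h)}\|\partial_{\tau_1}^2g\|\le C$, with $\partial_{\tau_1}^3g$ bounded as well, together with the fact that the traces $g(0,\cdot)$, $\partial_{\tau_1}g(0,\cdot)$ and $\partial_{\tau_2}g(0,\cdot)$ are $C^1$ on $[0,h]$, the first one vanishing at both endpoints and the other two of size $Ch$. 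Because $\sigma_2(h)$ has area $h^2/2$, the first estimate already yields the bound $\|J\|\le Ch^4$, where $J$ denotes the integral in the statement.

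Next I would exploit that $\ee^{\ii\omega n(\tau_1-\tau_2)}+\ee^{\ii\omega n(-\tau_1+\tau_2)}=2\cos(\omega n(\tau_2-\tau_1))$, so that $J=2\int_0^h\int_0^{\tau_2}g(\tau_1,\tau_2)\cos(\omega n(\tau_2-\tau_1))\,\D\tau_1\,\D\tau_2$, and integrate by parts \emph{twice} in the inner variable $\tau_1$. In the first step the antiderivative of $\cos(\omega n(\tau_2-\tau_1))$ vanishes on the diagonal $\tau_1=\tau_2$, so the only boundary contribution comes from $\tau_1=0$ and equals $\frac{1}{\omega n}g(0,\tau_2)\sin(\omega n\tau_2)$. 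In the second step the relevant antiderivative (a cosine) does \emph{not} vanish on the diagonal, and the diagonal boundary term contributes exactly $\frac{2}{\omega^2n^2}\int_0^h\partial_{\tau_1}g(\tau_2,\tau_2)\,\D\tau_2$, which is zero by condition (\ref{new_condition}). After these two integrations by parts, $J$ is the sum of (i) $\frac{2}{\omega n}\int_0^h g(0,\tau_2)\sin(\omega n\tau_2)\,\D\tau_2$; (ii) the vanishing diagonal term; (iii) $-\frac{2}{\omega^2n^2}\int_0^h\partial_{\tau_1}g(0,\tau_2)\cos(\omega n\tau_2)\,\D\tau_2$; and (iv) $-\frac{2}{\omega^2n^2}\int_0^h\int_0^{\tau_2}\partial_{\tau_1}^2g(\tau_1,\tau_2)\cos(\omega n(\tau_2-\tau_1))\,\D\tau_1\,\D\tau_2$.

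It then remains to estimate (i), (iii), (iv). For (i), one or two further integrations by parts in $\tau_2$ annihilate all boundary terms (because $g(0,\cdot)$ vanishes at the endpoints), and with $\|g(0,\cdot)\|\le Ch^2$, $\|\partial_{\tau_2}g(0,\cdot)\|\le Ch$ this gives a bound $C\min\{h^2\omega^{-2},h\omega^{-3}\}$. For (iii), $\|\partial_{\tau_1}g(0,\cdot)\|\le Ch$ gives $Ch^2\omega^{-2}$ at once, improved to $Ch\omega^{-3}$ by one integration by parts in $\tau_2$. For (iv), the prefactor $\omega^{-2}$ times the bounded integrand over the simplex gives $Ch^2\omega^{-2}$, turned into $Ch\omega^{-3}$ by one more integration by parts in $\tau_1$ (licit because $\partial_{\tau_1}^3g$ is bounded). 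Collecting the three, $\|J\|\le C\min\{h^2\omega^{-2},h\omega^{-3}\}$, and since $h\le h_0$ for a fixed $h_0$, this is $\le C\min\{h^2\omega^{-2},\omega^{-3}\}$; combined with $\|J\|\le Ch^4$ from the previous step, $\|J\|\le C\min\{h^4,h^2\omega^{-2},\omega^{-3}\}$, which is the claim.

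The main obstacle is the bookkeeping of the repeated integrations by parts: one must keep track of which boundary contributions still carry an oscillatory factor (so that a further integration by parts produces another power of $\omega^{-1}$) and which do not, and recognize that the \emph{only} genuinely non-oscillatory leftover, $\int_0^h\partial_{\tau_1}g(\tau_2,\tau_2)\,\D\tau_2$, is precisely the quantity annihilated by the extra interpolation condition (\ref{new_condition}). This is the entire reason for imposing (\ref{new_condition}): without it that term would be only $O(h^2\omega^{-2})$, which does not fit under the $\omega^{-3}$ part of the claimed minimum once $h$ ceases to be small compared with $\omega^{-1/2}$. A secondary point is the verification of the affine-reproduction property of this particular bilinear interpolant, which underlies both the $h^4$ bound and all the pointwise estimates on $g$ and its traces; the $\omega$-independence of every constant is automatic, since $F_{\bm{n}}$ itself does not involve $\omega$.
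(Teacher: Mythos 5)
Your argument is correct and is essentially the paper's proof: both rest on two integrations by parts in $\tau_1$, with the $\mathcal{O}(\omega^{-1})$ diagonal contributions cancelling between the two conjugate phases (your $\sin(0)=0$ after combining them into $2\cos(\omega n(\tau_2-\tau_1))$ is exactly the paper's cancellation of the two $\int_0^h(F_{\bm n}-p)(\tau_2,\tau_2)\,\D\tau_2$ terms), the non-oscillatory $\mathcal{O}(\omega^{-2})$ diagonal term annihilated by condition (\ref{new_condition}), and the vertex conditions killing the endpoint terms in the subsequent $\tau_2$ integrations by parts. The cosine repackaging and the explicit affine-reproduction remark are presentational refinements, not a different route.
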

\begin{proof}
    For simplicity, we can assume $n=1$. 
    It follows from the previous considerations that
    $(F_{\bm{n}}-p) = \mathcal{O}(h^2)$, $\partial_{\tau_1}^1(F_{\bm{n}}-p) = \mathcal{O}(h)$ and $\partial_{\tau_2}^1(F_{\bm{n}}-p) = \mathcal{O}(h)$.  
    Integration by parts and application of interpolation conditions gives 
    \begin{eqnarray*}
&&\left\|\int_0^h\int_0^{\tau_2}(F_{\bm{n}}-p)(\tau_1,\tau_2)\ee^{\ii\omega(\tau_1-\tau_2)}+(F_{\bm{n}}-p)(\tau_1,\tau_2)\ee^{\ii\omega(-\tau_1+\tau_2)}\D\tau_1\D\tau_2\right\|\leq\\
&&\frac{1}{\omega}\left(\left\|\int_0^h(F_{\bm{n}}-p)(\tau_2,\tau_2)-(F_{\bm{n}}-p)(0,\tau_2)\ee^{-\ii\omega\tau_2}\D\tau_2-\int_0^h(F_{\bm{n}}-p)(\tau_2,\tau_2)-(F_{\bm{n}}-p)(0,\tau_2)\ee^{\ii\omega\tau_2}\D\tau_2\right\|\right)+\\
&&\frac{1}{\omega^2}\left\|\int_0^h \partial_{\tau_1}^1((F_{\bm{n}}-p)(\tau_2,\tau_2))\D\tau_2\right\|+\frac{1}{\omega^2}\left\|\int_0^h\partial_{\tau_1}^1((F_{\bm{n}}-p)(0,\tau_2))\ee^{-\ii\omega\tau_2}\D\tau_2\right\|+\\
&& \frac{1}{\omega^2}\left\|\int_0^h \partial_{\tau_1}^1((F_{\bm{n}}-p)(\tau_2,\tau_2))\D\tau_2\right\|+\frac{1}{\omega^2}\left\|\int_0^h\partial_{\tau_1}^1((F_{\bm{n}}-p)(0,\tau_2))\ee^{\ii\omega\tau_2}\D\tau_2\right\|+\\
&&\frac{1}{\omega^2}\left\|\int_0^h\int_0^{\tau_2}\partial_{\tau_1}^2(F_{\bm{n}}-p)(\tau_1,\tau_2)\ee^{\ii\omega(\tau_1-\tau_2)}\D\tau_1\D\tau_2\right\|+\frac{1}{\omega^2}\left\|\int_0^h\int_0^{\tau_2}\partial_{\tau_1}^2(F_{\bm{n}}-p)(\tau_1,\tau_2)\ee^{\ii\omega(-\tau_1+\tau_2)}\D\tau_1\D\tau_2\right\|\leq\\
&&\frac{1}{\omega^2}\left\|\int_0^h \partial_{\tau_2}^1(F_{\bm{n}}-p)(0,\tau_2)\ee^{-\ii\omega\tau_2}\D\tau_2\right\|+\frac{1}{\omega^2}\left\|\int_0^h \partial_{\tau_2}^1(F_{\bm{n}}-p)(0,\tau_2)\ee^{\ii\omega\tau_2}\D\tau_2\right\|+C\min\left\{\frac{h^2}{\omega^2},\frac{1}{\omega^3}\right\}\leq\\
&&C\min\left\{\frac{h^2}{\omega^2},\frac{1}{\omega^3}\right\},
\end{eqnarray*}
which completes the proof.
\end{proof}
Since the function $F_{\bm{n}}$ is non-oscillatory, we can compute the integral (\ref{new_condition}) efficiently and effortlessly, using methods such as Gauss-Legendre quadrature.

In the case when function $f$ is of the form (\ref{ideal_function_f}), and the coefficients of  $f$ satisfy $\alpha_{-n}=\alpha_n, \ \forall n\in \bm{N}^1$, the improved scheme reads
\begin{eqnarray*}\label{scheme_resonnce}
    u^{k+1} &=& \Bigg( \ee^{h\mathcal{L}}+ \sum_{n_1}  \int_0^h \left(a_{1,0}+a_{1,1}\tau+ a_{1,2}\tau^2+a_{1,3}\tau^3 \right)\ee^{n_1\ii\omega( \tau+t_k)}\D\tau \nonumber \\ &+& \sum_{n_1+n_2\neq 0}\int_{\sigma_2(h)}(a_{2,0}+a_{2,1}\tau_1+a_{2,2}\tau_2)\ee^{\ii\omega(n_1(\tau_1+t_k)+n_2(\tau_2+t_k))}\D\tau_1\D\tau_2\\
    &+& \sum_{n=1 \nonumber }^{N}\int_{\sigma_2(h)}(b_0+ b_1\tau_1+b_2\tau_2+ b_3\tau_1\tau_2)(\ee^{\ii\omega n(\tau_1-\tau_2)}+\ee^{\ii\omega n(-\tau_1+\tau_2)})\D\tau_1\D\tau_2\\ &+&\sum_{n_1,n_2,n_3}\int_{\sigma_3(h)} (a_{3,0}+a_{3,1}\tau_1+a_{3,2}\tau_2+a_{3,3}\tau_3)\ee^{\ii\omega(n_1(\tau_1+t_k)+n_2(\tau_2+t_k)+n_3(\tau_3+t_k))}\D\tau_1\D\tau_2\D\tau_3\Bigg)u^k\nonumber,\\
    t_{k+1} &= &t_k+h \nonumber.
\end{eqnarray*}
\section{Application of the method to the wave equation}\label{sec4}
The proposed numerical scheme can be successfully applied to partial differential equations with the second-time derivative. Consider the equation
\begin{align} \label{main_equation}
&\partial_{tt}u=\mathcal{L} u(x,t)+f(x,t)u(x,t),\qquad t\in [0,t^\star],\ x\in\Omega\subset\mathbb{R}^m,\\ \nonumber
&u(x,0)=u_1(x), \quad \partial_t u(x,0) = u_2(x), \\ \nonumber
& u=0 \ \text{on} \   \partial\Omega \times [0,t^\star],
\end{align}
with function $f$ given in (\ref{ideal_function_f}). We write (\ref{main_equation}) as a first-order system
\arraycolsep=4pt\def\arraystretch{1.2}
\begin{equation}\label{eq:matrix}
\partial_t\left[  \begin{array}{c} \nonumber
u \\
v
\end{array} \right] = \left[  \begin{array}{cc}
0 & \mathcal{I} \\
\mathcal{L} & 0
\end{array} \right]	\left[  \begin{array}{c}
u \\
v
\end{array} \right]+ f\left[  \begin{array}{cc}
0 & 0 \\
\mathcal{I} & 0
\end{array} \right]\left[  \begin{array}{c}
u \\
v
\end{array} \right],
\end{equation}
where   $v  = \partial_t u$. Thus
\begin{equation}\label{eq:matrix_highly_oscillatory}
\partial_t\underbrace{\left[  \begin{array}{c} \nonumber
	u \\ 
    v
	\end{array} \right]}_{\varphi} = \underbrace{\left[  \begin{array}{cc}
	0 & \mathcal{I} \\
	\mathcal{L} & 0
	\end{array} \right]	}_{A}\left[  \begin{array}{c}
u \\
v
\end{array} \right]+\sum_{n=1}^{N}\ee^{\ii n\omega t}\underbrace{ \left[  \begin{array}{cc}
	0 & 0 \\
	\alpha_n & 0
	\end{array} \right] }_{\beta_n}\left[  \begin{array}{c}
u \\
v
\end{array} \right]
\end{equation}
and therefore
\begin{equation}\label{first_order}
\partial_t\varphi = A\varphi+h\varphi, \quad \varphi(x,0)=[u_1(x),u_2(x)], \quad A(u,v) = (v,\mathcal{L}u), \quad \beta_n(u,v)=(0,u\alpha_n),
\end{equation}
where $h(x,t)= \sum_{n=1}^{N}\ee^{\ii n\omega t}\beta_n(x,t)$ is a highly oscillatory function  and $\varphi$ is a vector valued function.
Suppose that $\mathcal{L}$ is a second-order differential operator which has symmetric form
$$\mathcal{L}u = \sum_{i,j=0}^{m}\partial_{x_j}\left(a_{ij}\partial_{x_j}u\right)-cu,$$
where functions $a_{ij}=a_{ji}, \ i,j=1,\dots,m$ and $c\geq 0$.  
By applying  Duhamel's formula we write (\ref{first_order}) as

\begin{eqnarray}\label{eq:Duhamel2}
\varphi(t) = \ee^{tA}\varphi_0+\int_{0}^{t}\ee^{(t-\tau)A}h(\tau)\varphi(\tau)\D\tau.
\end{eqnarray}
 Operator  $A: D(A):= \left[ H^{2}(\Omega)\cap H_0^1(\Omega) \right]\times H_0^1(\Omega)\rightarrow H_0^1(\Omega)\times L^2(\Omega)$  is the infinitesimal generator of  a strongly continuous semigroup  $\{\ee^{tA}\}$ on $H_0^1(\Omega)\times L^2(\Omega)$ \cite{EVANS}.  One can show that the Neumann series converges absolutely and uniformly in the norm of space $H_0^1(\Omega)\times L^2(\Omega)$ to the solution of equation (\ref{eq:Duhamel2}) \cite{perczyński2023asymptotic}.

 \section{Numerical examples}\label{sec5}
In this chapter, we employ the proposed numerical integrator to solve highly oscillatory heat equations and wave equations. For each equation, it is possible to determine the analytical solution,  enabling accurate comparisons with the numerical approximation. The $L^2$ norm of the error is considered in any presented example. In our numerical experiments, to find an approximate solution, we use the Fourier and Chebyshev spectral methods, as described in \cite{trefethen,shen_spectral}. In Examples 1, 3 and 4 we used $M=100$ spatial grid points. Example 2 concerns a two-dimensional case, in which we used $M=20$ grid points.
\\

\noindent \emph{Example 1. The heat equation.}\\
Consider the equation
\begin{align}\label{heat_example_t} \nonumber
&\partial_{t}u= \partial_{xx}^2 u+f(x,t)u(x,t),\qquad t\in [0,1],\ x\in(0,2\pi),\\ 
&u(x,0)=u_0(x), \\
&u(0,t) = 0=u(2\pi,t) ,\nonumber
\end{align}
with initial condition $u_0$
$$u_0(x)=\sin(x),$$
and function $f$
$$f(x,t) = 1-\underbrace{\frac{(-\ii+t(\omega-3\ii))\cos(x)}{\omega}}_{\alpha_1}\ee^{\ii\omega t}+\underbrace{\frac{\sin(x)^2 t^2}{\omega^2}}_{\alpha_2}\ee^{2\ii\omega t}.$$
The potential function $f$ involves time-dependent coefficients $\alpha_1$ and $\alpha_2$
The solution to (\ref{heat_example_t}) is  $$u(x,t) =  \ee^{\ii\ee^{\ii\omega t}\cos(x)t/\omega}\sin(x).$$
Figure \ref{figure1} displays the error of the method for equation (\ref{heat_example_t}).
\begin{figure}[H] 
\captionsetup[subfigure]{labelformat=empty}
\begin{subfigure}{.5\textwidth}
\centering
\includegraphics[height=6.65cm]{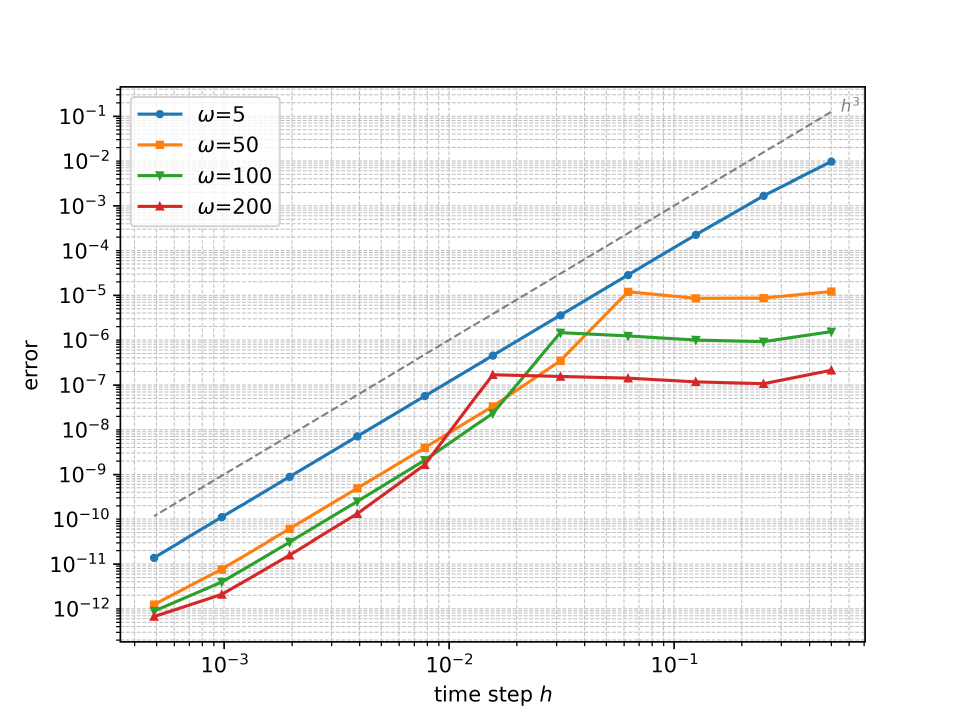}
\caption{}
\end{subfigure}%
\begin{subfigure}{.5\textwidth}
\centering
\includegraphics[height=6.65cm]{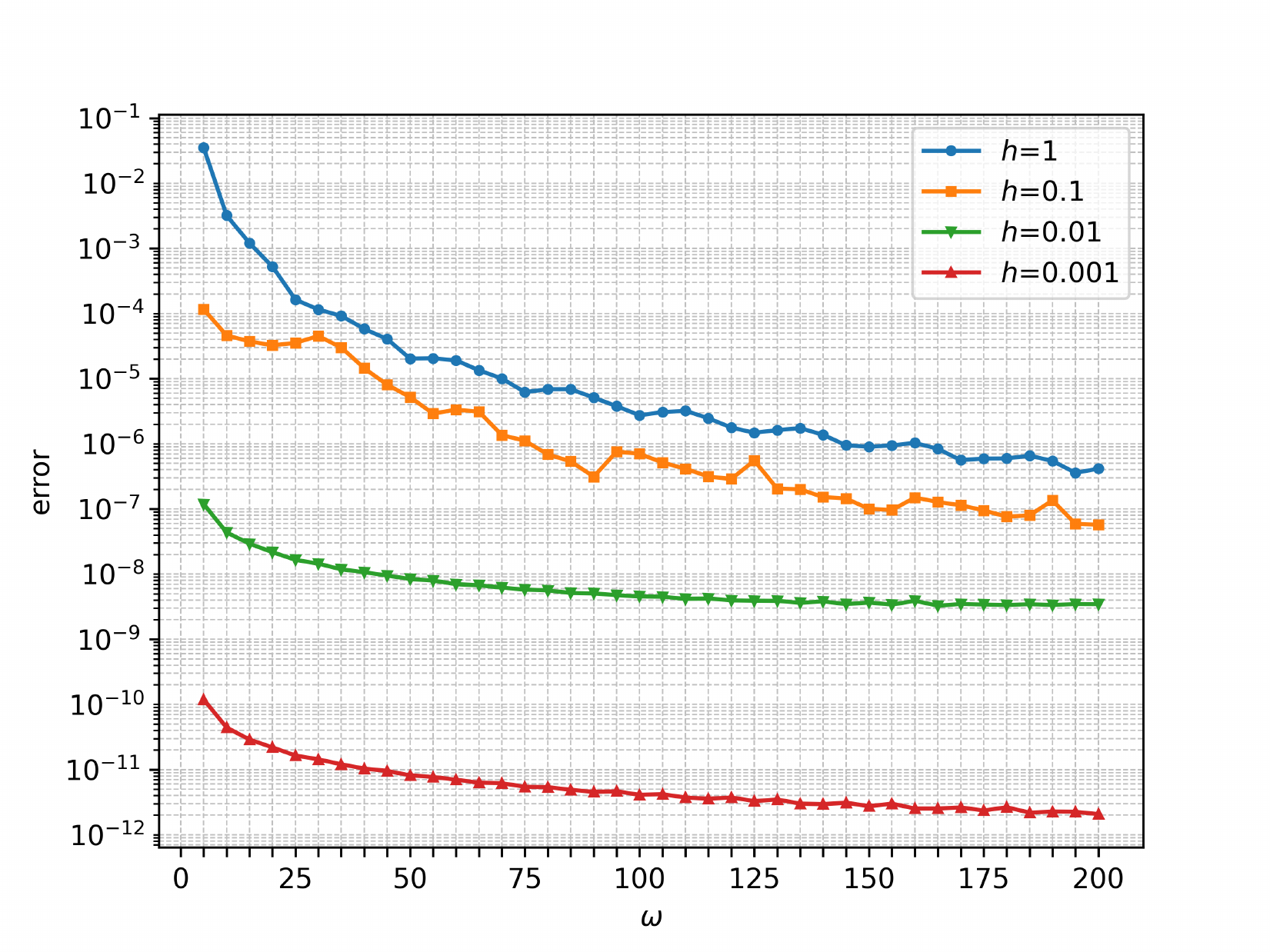}
\caption{} 
\end{subfigure}%
\caption{ Numerical approximation of the solution to equation (\ref{heat_example_t}).
 Error versus time step (left graph) and error versus parameter $\omega$ (right graph).} \label{figure1}
\end{figure} 
\noindent \emph{Example 2. Two-dimensional heat equation.}
\begin{align} \label{2dheat_example} \nonumber
&\partial_{t}u(x,y,t)= \partial_{xx}^2 u(x,y,t)+\partial_{yy}^2 u(x,y,t)+f(x,y,t)u(x,y,t),\qquad t\in [0,1],\ x\in \Omega,\\ 
&u(x,y,0)=u_0(x,y), \\
&u(x,y,t) = 0 \quad \text{on} \ \partial \Omega ,\nonumber
\end{align}
where domain 
$\Omega = [-1,1] \times [-1,1]$. 
The initial condition is $$u_0(x,y) = \sin(\pi x)\sin(\pi y)\ee^{\cos(\pi x)\cos(\pi y)/\omega} ,$$
and function $f$
$$f(x,y,t) = 2\pi^2+ \underbrace{\frac{(6\pi^2+\ii\omega)\cos(\pi x)\cos(\pi y)}{\omega}}_{\alpha_1}\ee^{\ii\omega t}+ \underbrace{\frac{0.5\pi^2(-1+\cos(2\pi x)\cos(2\pi y))}{\omega^2}}_{\alpha_2}\ee^{2\ii\omega t}.$$
The solution to equation (\ref{2dheat_example}) reads $$u(x,y,t) = \sin(\pi x)\sin(\pi y)\exp(\exp(\ii\omega t)\cos(\pi x)\cos(\pi y)/\omega).$$
Figure \ref{figure2} presents the error of the proposed method applied to equation (\ref{2dheat_example}).
\begin{figure}[H]
\captionsetup[subfigure]{labelformat=empty}
\begin{subfigure}{.5\textwidth}
\centering
\includegraphics[height=6.65cm]{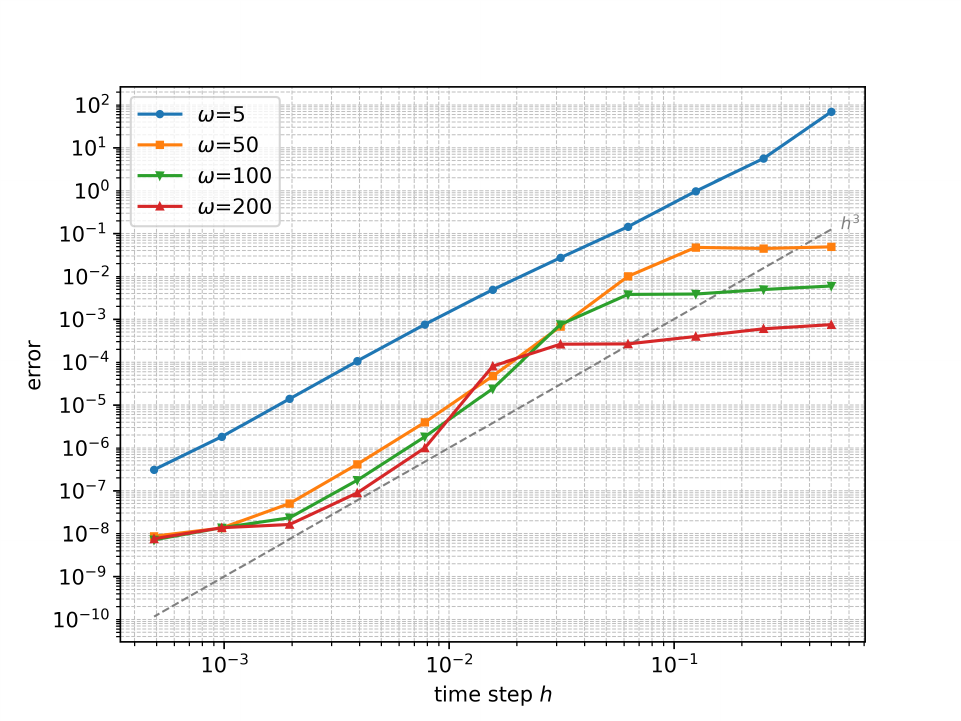}
\caption{}
\end{subfigure}%
\begin{subfigure}{.5\textwidth}
\centering
\includegraphics[height=6.65cm]{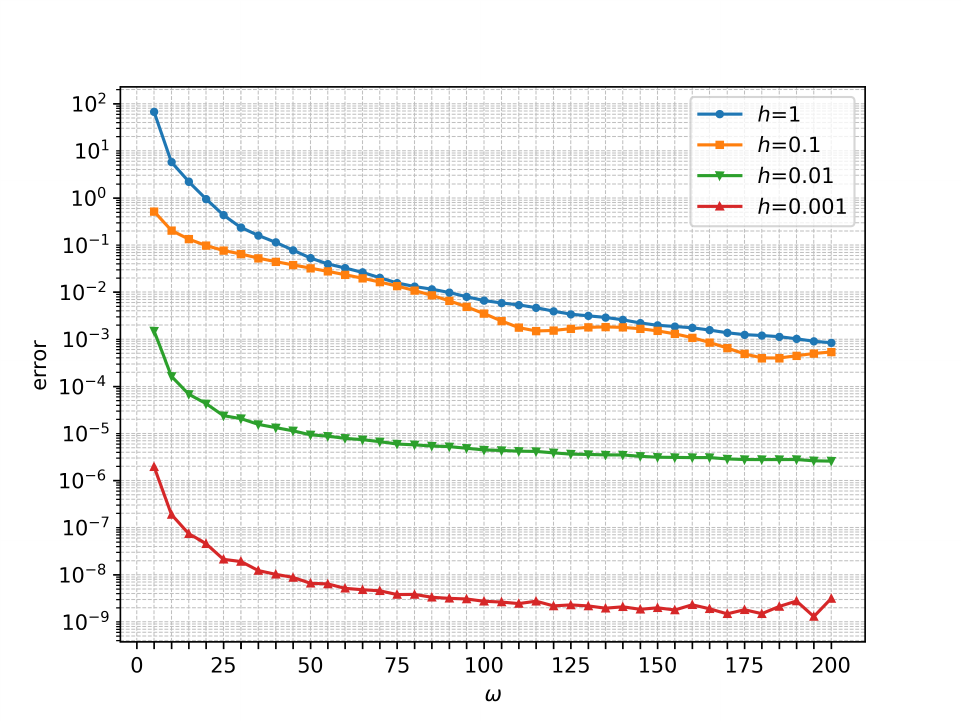}
\caption{}
\end{subfigure}%
\caption{ Numerical approximation of the solution to equation (\ref{2dheat_example}).
Error versus time step (left graph) and error versus parameter $\omega$ (right graph).} \label{figure2}
\end{figure}

\noindent \emph{Example 3. The wave equation with nonresonance points.}\\
\begin{align} \label{wave_example_bz}
 &\partial_{tt}u=\partial_{xx}  u+f(x,t)u(x,t),\qquad t\in [0,1],\ x\in (-L,L), \quad L=8,\\  \nonumber
 &u(x,0)=u_0(x), \quad \partial_t u(x,0) = v_0(x),\\
 &u(-L,t) =u(L,t) ,\nonumber \\
 & \partial_{t}u(-L,t) =\partial_{t}u(L,t),   \nonumber
 \end{align}
where initial conditions $$u_0(x)= \ee^{-x^2(1/2+1/\omega^2)}, \quad v_0(x) = -\frac{ \ii x^2}{\omega} u_0(x), $$
and function $f$
$$f(x,t) = 1-x^2+\frac{2+x^2(-4+\omega^2)}{\omega^2}\ee^{\ii\omega t}-\frac{x^2(4+x^2\omega^2)}{\omega^4}\ee^{2\ii\omega t}. $$
Solution to (\ref{wave_example_bz}) is equal
to $$u(x,t) = \ee^{-x^2/2}\ee^{-\ee^{\ii t \omega}x^2/\omega^2}.$$
Figure \ref{figure3} illustrates the error associated with the approximation of the solution to equation (\ref{wave_example_bz}).
\begin{figure}[H]
\captionsetup[subfigure]{labelformat=empty}
\begin{subfigure}{.5\textwidth}
\centering
\includegraphics[height=6.65cm]{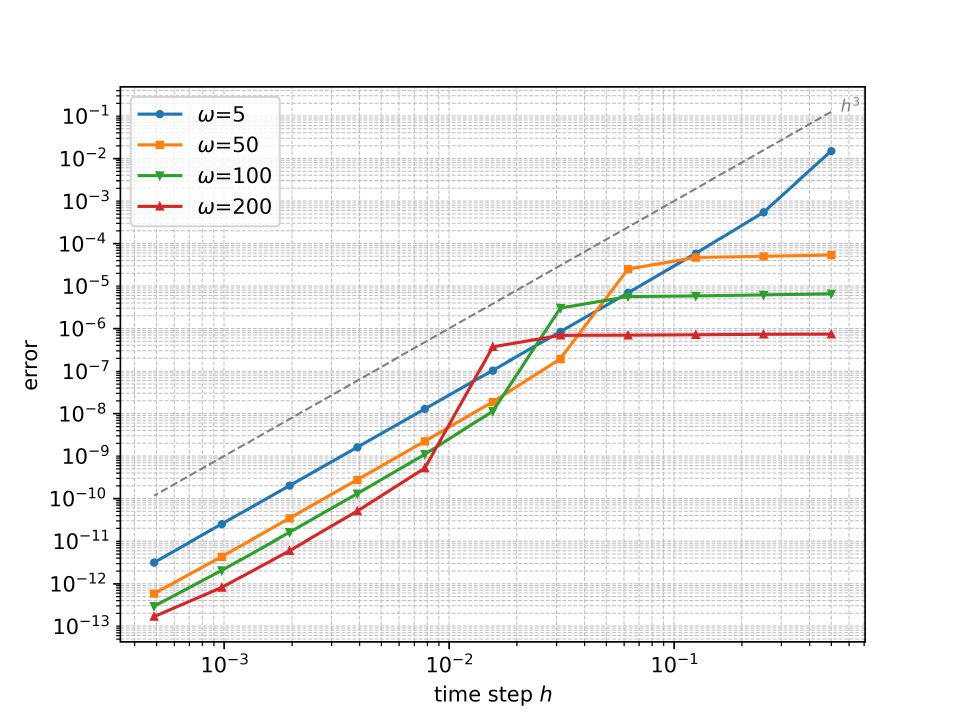}
\caption{}
\end{subfigure}%
\begin{subfigure}{.5\textwidth}
\centering
\includegraphics[height=6.65cm]{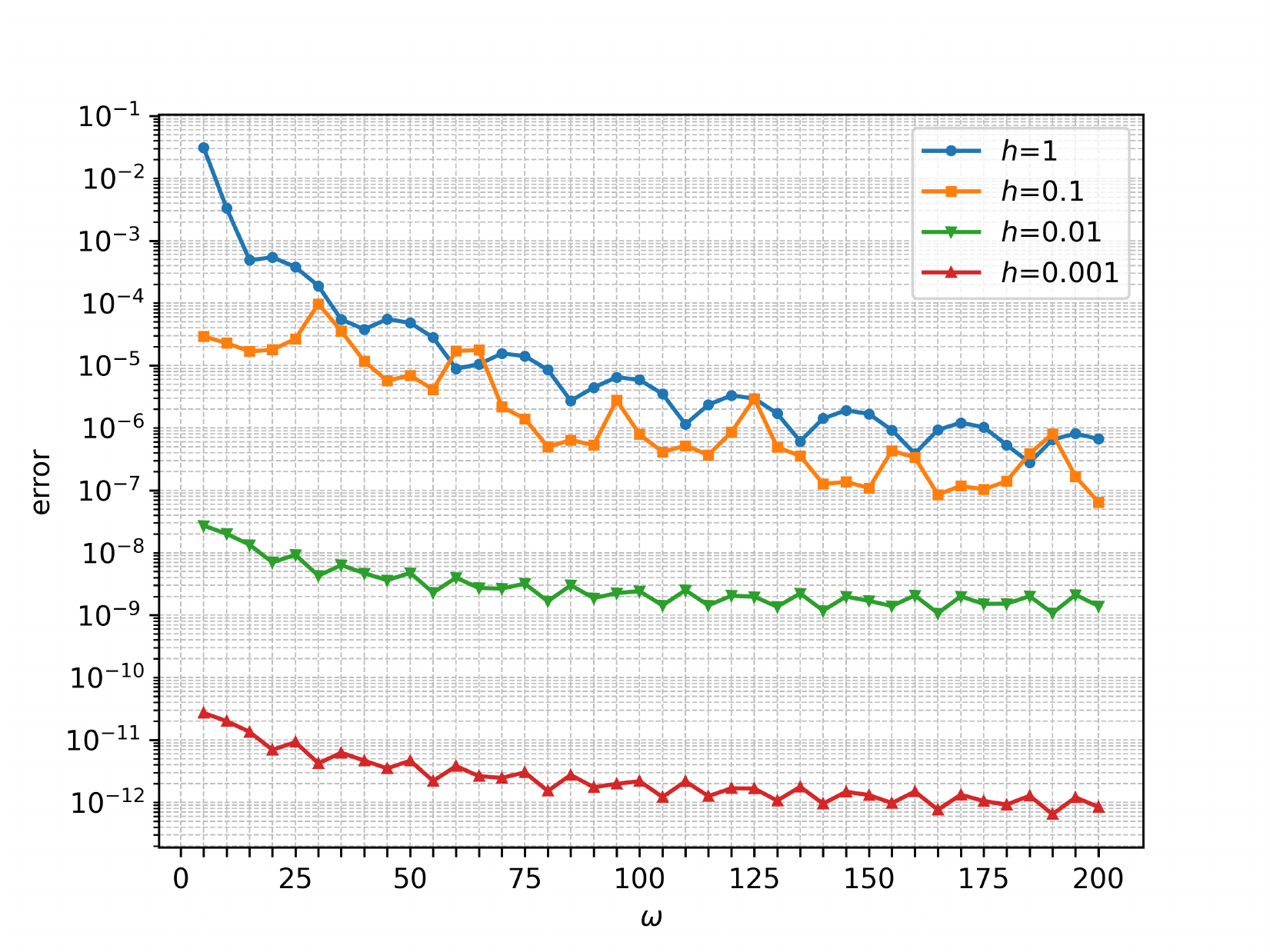}
\caption{}
\end{subfigure}%
\caption{ Numerical approximation of the solution to equation (\ref{wave_example_bz}).
Error versus time step (left graph) and error versus  $\omega$ (right graph).}\label{figure3}
\end{figure}

\noindent \emph{Example 4. The wave equation with resonance points.}\\
In the last example, consider now the wave equation with potential function $f$ with negative frequencies
 \begin{align} \label{wave_example}
 &\partial_{tt}u=\partial_{xx} \nonumber u+f(x,t)u(x,t),\qquad t\in [0,1],\ x\in (-L,L), \quad L=8,\\ 
 &u(x,0)=\ee^{-x^2(1/2+1/\omega^2)}, \quad \partial_t u(x,0) = 0,\\
 &u(-L,t) =u(L,t) ,\nonumber \\
 & \partial_{t}u(-L,t) =\partial_{t}u(L,t),   \nonumber
 \end{align}
 where function $f$ takes the form
 $$f(x,t) = 1-x^2+\frac{(2+x^2\omega^2-4x^2)\cos(\omega t)}{\omega^2}-\frac{4x^2\cos^2(\omega t)}{\omega^4}+\frac{x^4\sin^2(\omega t)}{\omega^2}.$$
  The solution of (\ref{wave_example}) is equal to $$u(x,t) = \ee^{-\cos(\omega t)x^2/\omega^2}\ee^{-x^2/2}.$$
  Figure \ref{figure4} presents the error of the proposed method for equation (\ref{wave_example}).
\begin{figure}[H]
\captionsetup[subfigure]{labelformat=empty}
\begin{subfigure}{.5\textwidth}
\centering
\includegraphics[height=6.65cm]{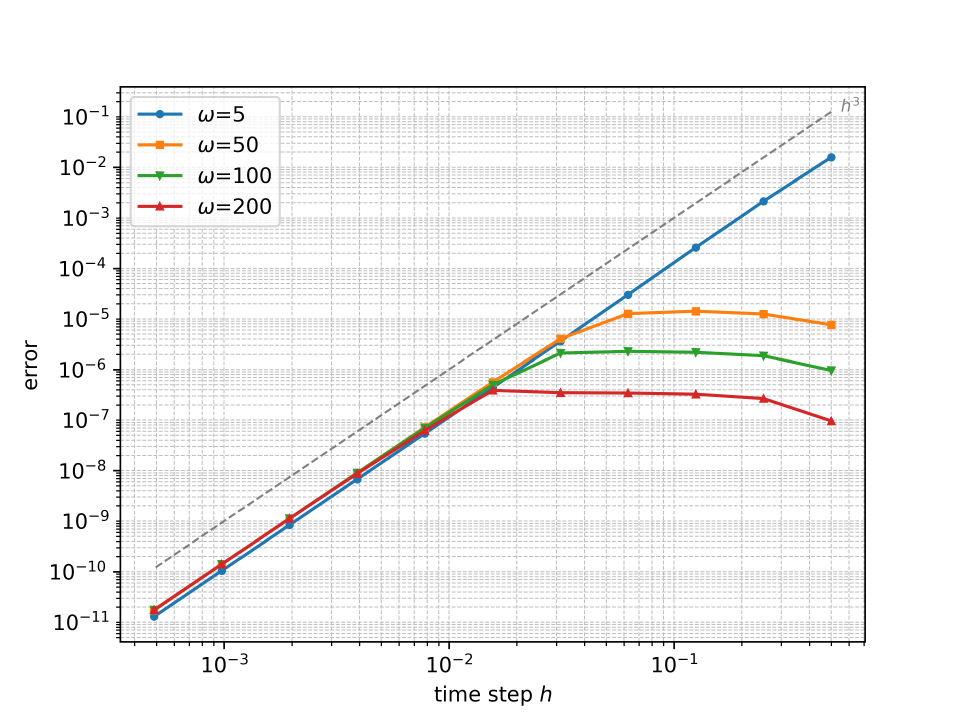}
\caption{}
\end{subfigure}%
\begin{subfigure}{.5\textwidth}
\centering
\includegraphics[height=6.65cm]{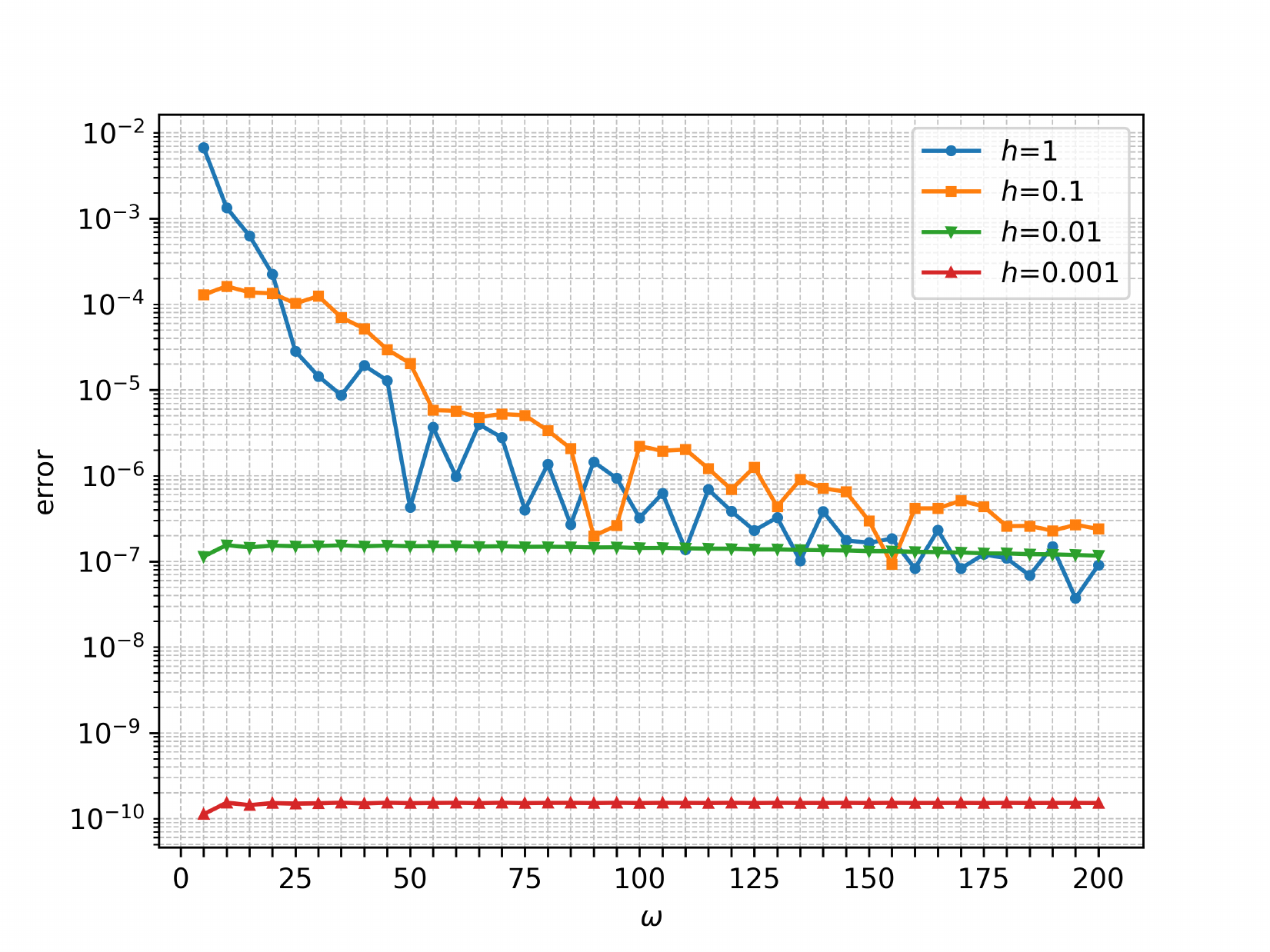}
\caption{}
\end{subfigure}%
\caption{ Numerical approximation of the solution to equation (\ref{wave_example}).
Error versus time step (left figure) and error versus $\omega$ (right figure).}\label{figure4}
\end{figure}

\noindent The Neumann series converges for all variables $t$, unlike the Magnus expansion, which converges only locally. Therefore, in the proposed scheme, any time step can be taken to find an approximate solution. In Figure \ref{figure_bigomega}, we illustrate the error of the method for all four examples with step size $h=1$, where $\omega$ ranges from 5 to 1000. In each graph of error versus time step $h$, it can be observed that the proposed method is effective for both small ($\omega=5$) and large ($\omega=200$) oscillatory parameter $\omega$.

\begin{figure}
\begin{center}
	\includegraphics[width=0.7\textwidth]{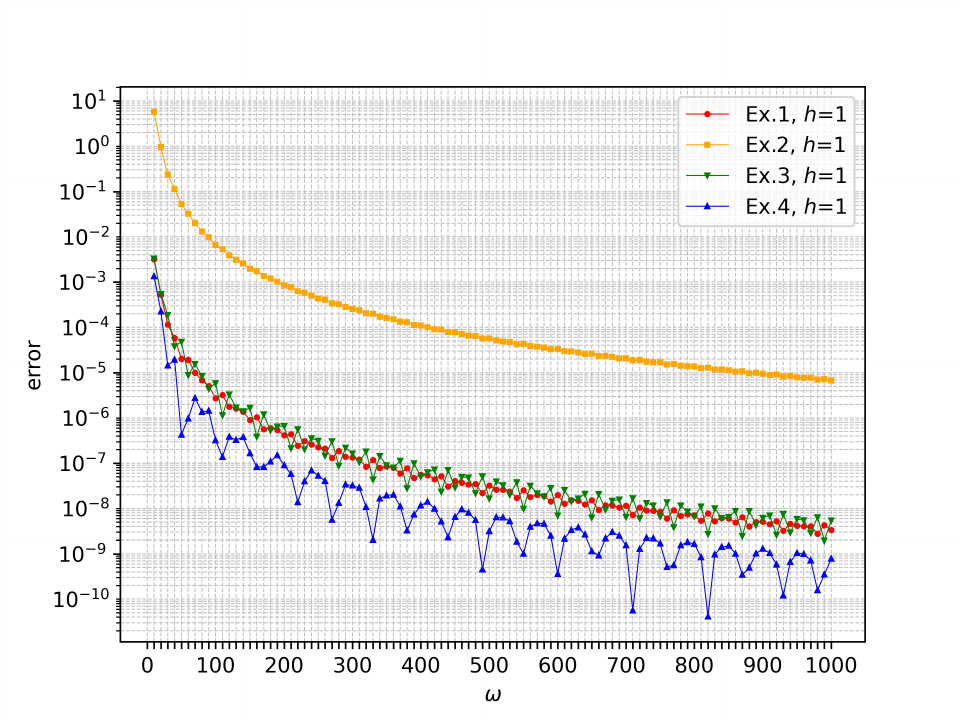}  
	\caption{The error of approximating the solutions of equations (\ref{heat_example_t}), (\ref{2dheat_example}), (\ref{wave_example_bz}), and (\ref{wave_example}),  for $\omega$ ranging from 5 to 1000, with step sizes $h=1$.} \label{figure_bigomega}
	\end{center}
\end{figure}

\subsection{Comparison with other methods}
In this section, we compare the proposed numerical method (denoted as $NF3$) with selected existing methods. For this purpose, we used schemes based on the Magnus expansion: the exponential fourth-order method (denoted as $M4$) and the exponential midpoint method of order two (denoted as $M2$). Both integrators are described in detail in \cite{hochbruck_ostermann_2010}. Each scheme was applied to equations (\ref{heat_example_t}), (\ref{2dheat_example}), and (\ref{wave_example_bz}), where in each case the parameter $\omega=500$.
As is well known, the methods $M4$ and $M2$ are very effective for nonoscillatory equations. However, for a large parameter $\omega$ which accounts for the oscillation of the equation, their effectiveness is limited. The proposed $NF3$ method performs particularly well in a highly oscillatory regime.
The results of the comparisons are shown in Figure \ref{fig:wykresy}. 
%The proposed method $NF3$ outperforms the other methods.

   \begin{figure}[H]
    \centering
    \begin{subfigure}{0.33\textwidth}
        \centering
        \includegraphics[width=\linewidth,height=6cm]{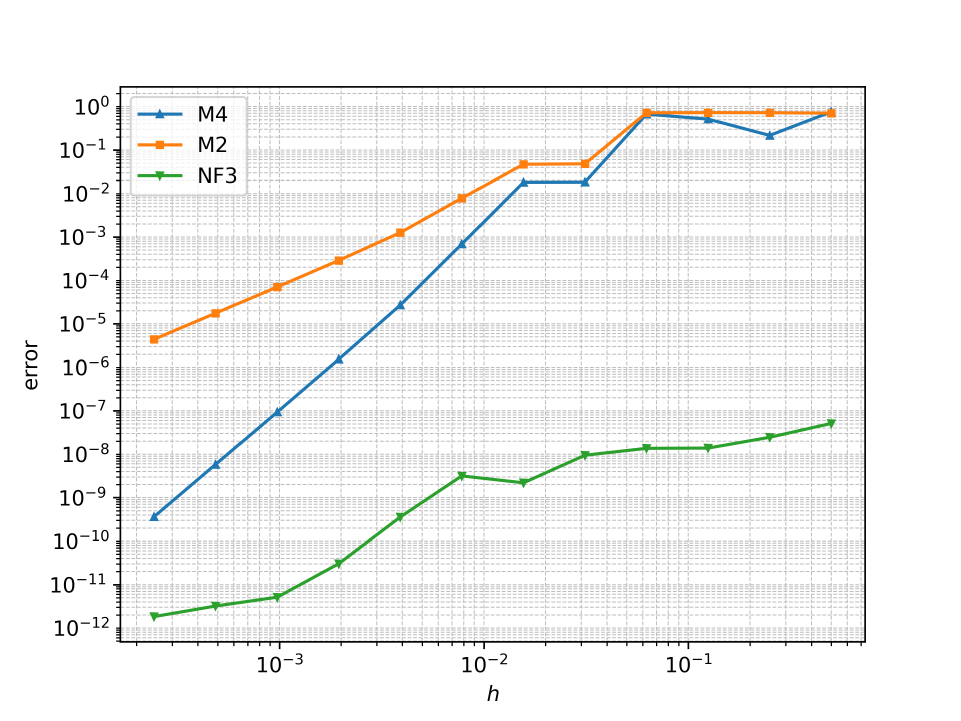}
        \caption{\emph{Example 1}}
    \end{subfigure}%
    \hfill
    \begin{subfigure}{0.33\textwidth}
        \centering
        \includegraphics[width=\linewidth,height=6cm]{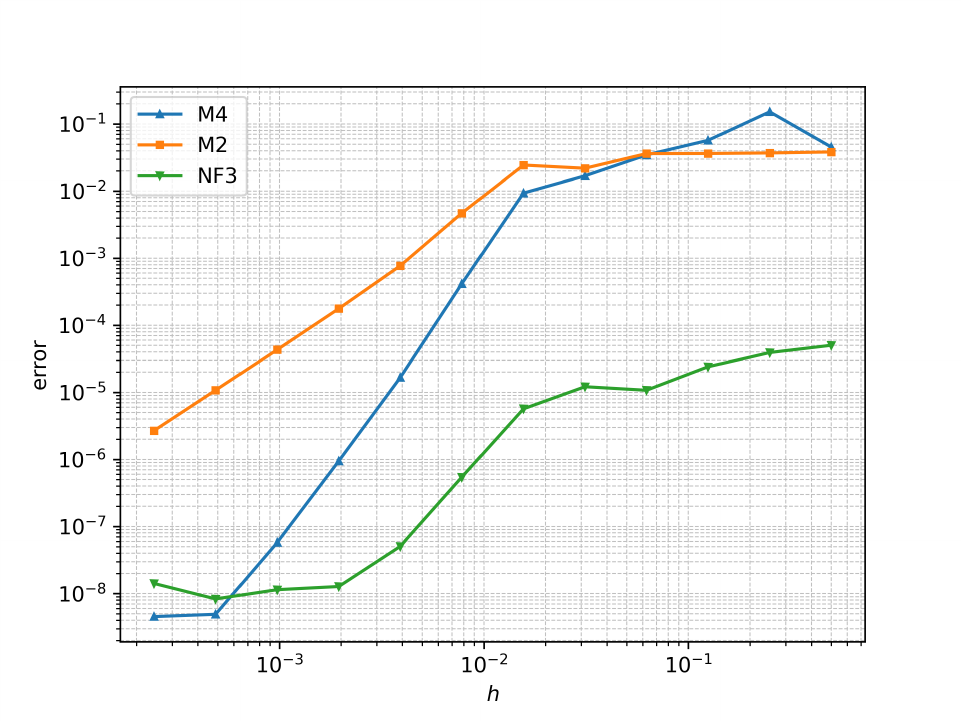}
        \caption{\emph{Example 2}}
    \end{subfigure}%
    \hfill
    \begin{subfigure}{0.33\textwidth}
        \centering
        \includegraphics[width=\linewidth,height=6cm]{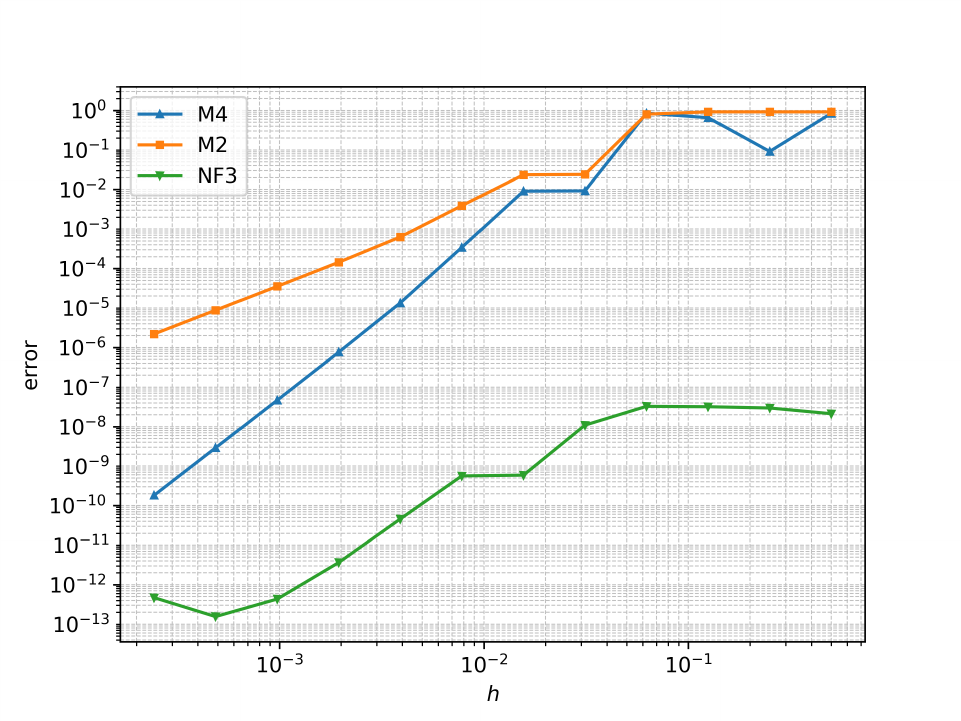}
        \caption{\emph{Example 3}}
    \end{subfigure}
    
    \medskip
    
    \begin{subfigure}{0.33\textwidth}
        \centering
        \includegraphics[width=\linewidth,height=6cm]{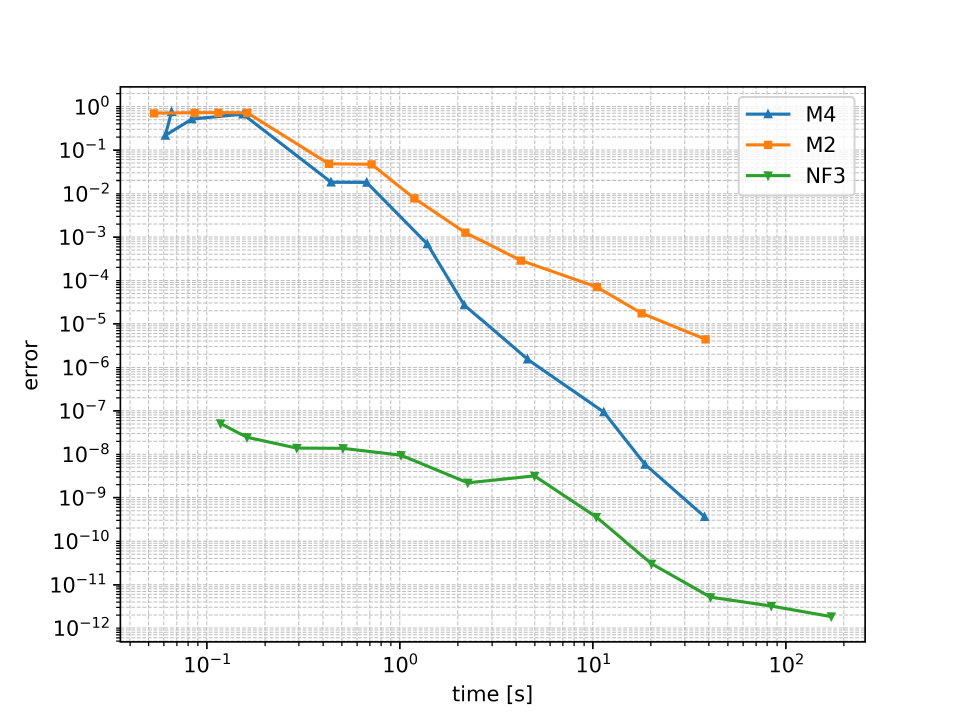}
        \caption{\emph{Example 1}}
    \end{subfigure}%
    \hfill
    \begin{subfigure}{0.33\textwidth}
        \centering
        \includegraphics[width=\linewidth,height=6cm]{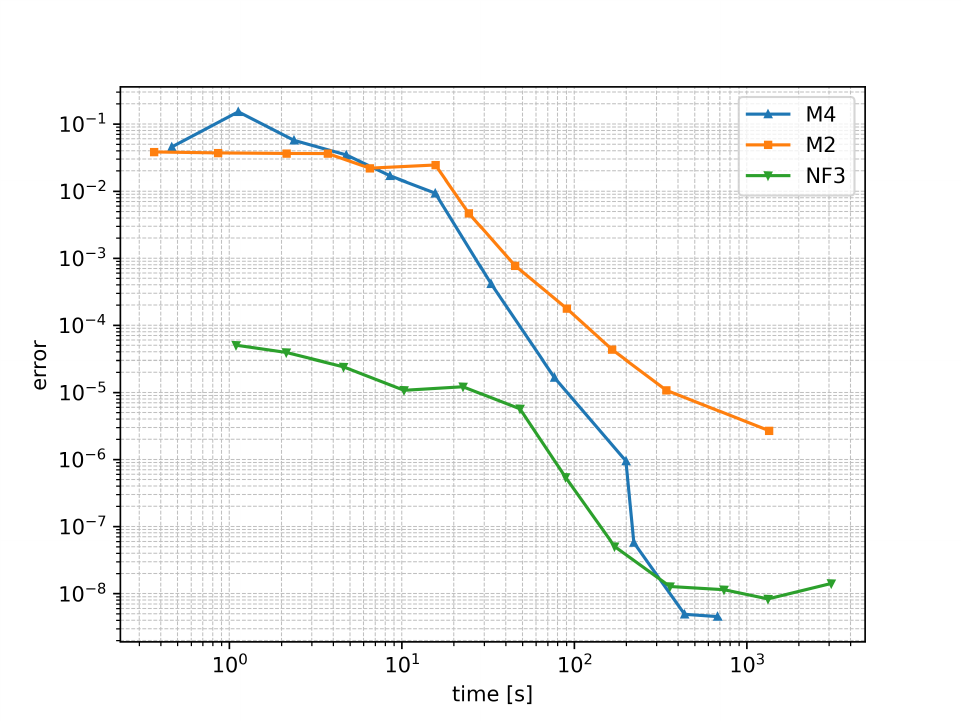}
        \caption{\emph{Example 2}}
    \end{subfigure}%
    \hfill
    \begin{subfigure}{0.33\textwidth}
        \centering
        \includegraphics[width=\linewidth,height=6cm]{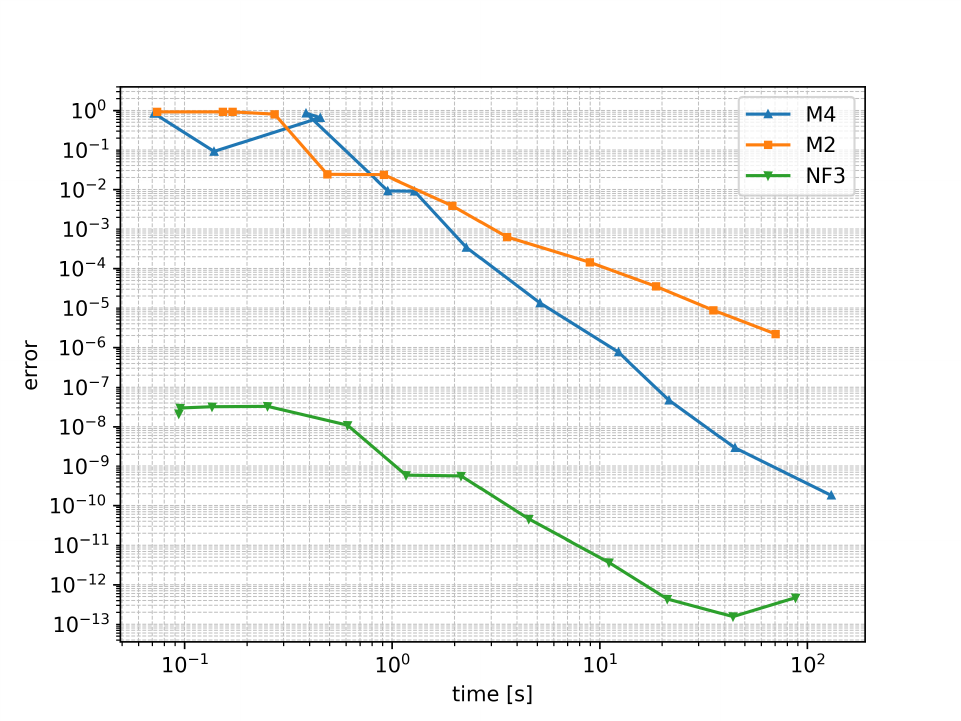}
        \caption{\emph{Example 3}}
    \end{subfigure}
    
    \caption{Comparison of the proposed method $NF3$ with the exponential 4th order method (M4) and the exponential 2nd order midpoint method (M2). The numerical schemes have been applied to the equations (\ref{heat_example_t}), (\ref{2dheat_example}) and (\ref{wave_example_bz}), where $\omega=500$. Top row presents accuracy of schemes and the bottom row time of computation in seconds.}
    \label{fig:wykresy}
\end{figure}

	\section{Conclusion}
	The paper proposed a numerical integrator designed for linear partial differential equations with a highly oscillatory potential function of type (\ref{ideal_function_f}).   
	The numerical scheme is constructed by expanding the solution into the Neumann series. Subsequently, the first three integrals of the Neumann series are approximated using the Filon method. The classical and asymptotic order of the scheme is 3, which is confirmed by numerical experiments. The method is effective for both small and large values of the oscillatory parameter $\omega$.
	\\
	There are possible modifications of the proposed integrator.  For integrals from the Neumann series, different extensions of the Filon method can be applied. For example, the nonoscillatory integrands can be approximated not only at points that are the ends of the integration interval but also at intermediate points. This should further improve the accuracy of the method.
	%The method appears to be applicable also to more general potential functions $f$,
	%\begin{equation*}
	%f(x,t) = \sum_{n=1}^{N}\alpha_n(x,t)\ee^{\ii \omega g_n(t)}, \quad \omega \gg 1, \quad N\in \mathbb{N},
	%\end{equation*}
	%where $\alpha_n$ and $g_n$ are sufficiently smooth functions.
	 %However, this requires further research on approximating solutions of highly oscillating equations.

\begin{appendices}

\section{}\label{secA1}
In this section, we provide precise calculations for approximating highly oscillatory integrals: univariate, bivariate, and trivariate, from the Neumann series. 
\\
  
\noindent \emph{Univariate integral}

\noindent Consider  the following  integral
\begin{eqnarray*}
    \int_0^h \ee^{(h-\tau)\mathcal{L}}\alpha_{n_1}[t](\tau)\ee^{\tau\mathcal{L}}u[t](0)\ee^{n_1\ii\omega\tau}\D\tau.
\end{eqnarray*}
We denote $F(\tau):=\ee^{(h-\tau)\mathcal{L}}\alpha_{n_1}[t](\tau)\ee^{\tau\mathcal{L}}u[t](0) $.
Function $F$ is approximated  by using Hermite interpolation
$F(0)=p(0)$, $F(h)=p(h)$, $F'(0)=p'(0)$, $F'(h)=p'(h)$.
%, where  $F'(h) = \frac{\partial^1}{\partial \tau^1} F(\tau) \,\rule[-4pt]{0.75pt}{17pt}_{\,\tau=h}$ and $F'(0) = \frac{\partial^1}{\partial \tau^1} F(\tau) \,\rule[-4pt]{0.75pt}{17pt}_{\,\tau=0}$.
The polynomial $p$ approximating function  $F$ is 
\begin{eqnarray*}
p(\tau)&=&F(0)+\tau F'(0)+\frac{\tau^2}{h^2}(3F(h)-3F(0)-2hF'(0)-hF'(h))+\frac{\tau^3}{h^3}(hF'(h)-2F(h)+2F(0)+hF'(0)),\\
&=& F(0)+a_{1,1}\tau + a_{1,2}\tau^2 + a_{1,3}\tau^3,
\end{eqnarray*}
where
\begin{eqnarray*}
    a_{1,1}&=&F'(0),\\
    a_{1,2}&=&\frac{1}{h^2}(3F(h)-3F(0)-2hF'(0)-hF'(h)),\\
    a_{1,3}&=& \frac{1}{h^3}(hF'(h)-2F(h)+2F(0)+hF'(0)).
\end{eqnarray*}
If function $\alpha_{n_1}$ is dependent on time, then
\begin{eqnarray*}
&&F(0)= \ee^{h\mathcal{L}}\alpha_{n_1}[t](0)u[t](0), \qquad  F'(0) = \left(-\ee^{h\mathcal{L}}ad^1_\mathcal{L}(\alpha_{n_1}[t](0))+\ee^{h\mathcal{L}}\alpha_{n_1}[t]'(0)\right)u[t](0),\\
&&F(h) =\alpha_{n_1}[t](h)\ee^{h\mathcal{L}}u[t](0), \qquad  F'(h) = \left(-ad^1_\mathcal{L}(\alpha_{n_1}[t](h))\ee^{h\mathcal{L}}+\alpha_{n_1}[t]'(h)\ee^{h\mathcal{L}}\right)u[t](0),
\end{eqnarray*}
where
$ad_\mathcal{L}^{1}\big(\alpha\big)=\big[\mathcal{L}, \alpha\big]$ and $[X,Y]\equiv XY-YX$ is the commutator of $X$ and $Y$.
Approximation of the univariate integral is as follows
\begin{eqnarray*}
    \int_0^hF(\tau)\ee^{n_1\ii\omega \tau}\D\tau\approx  \int_0^h \left(F(0)+a_{1,1}\tau+a_{1,2}\tau^2+a_{1,3}\tau^3\right)\ee^{n_1\ii\omega \tau}\D\tau.
\end{eqnarray*}
\noindent \emph{Bivariate integral with nonresonnce points}\\
\noindent We approximate the following bivariate integral
\begin{eqnarray*}
  \int_0^h \int_0^{\tau_2}F(\tau_1,\tau_2)\ee^{\ii\omega(n_1+n_2)}\D\tau_1\D\tau_2, 
\end{eqnarray*}
where $F(\tau_1,\tau_2) := \ee^{(h-\tau_2)\mathcal{L}}\alpha_{n_2}[t](\tau_2)\ee^{(\tau_2-\tau_1)\mathcal{L}}\alpha_{n_1}[t](\tau_1)\ee^{\tau_1\mathcal{L}}u[t](0)
$.
Function $F$ is interpolated in the nodes $(0,0)$, $(h,h)$, $(0,h)$, and $F(0,0)=p(0,0)$, $F(0,h)=p(0,h)$, $F(h,h)=p(h,h)$,
where $p(\tau_1,\tau_2)$ is a linear polynomial that approximate function $F$
$$F(\tau_1,\tau_2) =   \underbrace{F(0,0)+a_{2,1}\tau_1+a_{2,2}\tau_2}_{p(\tau_1,\tau_2)}+ \mathcal{O}(h^2),$$
where 
\begin{eqnarray*}
   a_{2,1} = \frac{1}{h}(F(h,h)-F(0,h)), \quad a_{2,2} = \frac{1}{h}(F(0,h)-F(0,0)), 
\end{eqnarray*}
and
\begin{eqnarray*}
    F(0,0) &=& \ee^{h\mathcal{L}}\alpha_{n_2}[t](0)\alpha_{n_1}[t](0)u[t](0), \quad F(0,h) = \alpha_{n_2}[t](h)\ee^{h\mathcal{L}}\alpha_{n_1}[t](0)u[t](0), \\
    F(h,h)&=&\alpha_{n_2}[t](h)\alpha_{n_1}[t](h)\ee^{h\mathcal{L}}u[t](0).
\end{eqnarray*}
Approximation of the bivariate integral reads
\begin{eqnarray*}  \int_0^h\int_0^{\tau_2}F(\tau_1,\tau_2)\ee^{\ii\omega(n_1\tau_1+n_2\tau_2)}\D\tau_1\D\tau_2 \approx \int_0^h\int_0^{\tau_2} (F(0,0)+a_{2,1}\tau_1+a_{2,2}\tau_2)\ee^{\ii\omega(n_1\tau_1+n_2\tau_2)}\D\tau_1\D\tau_2.
\end{eqnarray*}
\\

\noindent \emph{Bivariate integrals with resonnce points}

\noindent Consider the following sum of two integrals with resonnce points $(n,-n)$ and $(-n,n)$
    \begin{eqnarray*}  \int_0^h\int_0^{\tau_2}F(\tau_1,\tau_2)\ee^{\ii\omega n(\tau_1-\tau_2)}\D\tau_1\D\tau_2 +\int_0^h\int_0^{\tau_2}F(\tau_1,\tau_2)\ee^{\ii\omega n(-\tau_1+\tau_2)}\D\tau_1\D\tau_2,
\end{eqnarray*}
where $F(\tau_1,\tau_2):= \ee^{(h-\tau_2)\mathcal{L}}\alpha_n[t](\tau_2)\ee^{(\tau_2-\tau_1)\mathcal{L}}\alpha_n[t](\tau_1)\ee^{\tau_1\mathcal{L}}u[t](0)$. 
The bivariate integrals with resonance points necessitate the imposition of an additional interpolating condition.
Let 
    $p(\tau_1,\tau_2) = F(0,0)+ b_1\tau_1+b_2\tau_2+ b_3\tau_1\tau_2$, be a polynomial with
coefficients $b_j$ defined by the formulas
\begin{eqnarray*}
    b_1 &=& \frac{1}{h}(F(0,h)-F(0,0)),\\
    b_2&=&\frac{1}{h}(2X+F(0,h)-F(h,h)),\\
    b_3&=&\frac{2}{h^2}(F(h,h)-F(0,h)-X),
\end{eqnarray*}
where $$X = \int_0^h\partial_{\tau_1}^1 F(\tau_2,\tau_2)\D\tau_2 = \int_0^{h}\ee^{(h-\tau_2)\mathcal{L}}\alpha_{n}[t](\tau_2)\left(-ad_{\mathcal{L}}^1(\alpha_{n}[t](\tau_2))+\alpha_n[t]'(\tau_2)\right)\ee^{\tau_2\mathcal{L}}u[t](0)\D\tau_2.$$
Polynomial $p$ satisfies the conditions
\begin{eqnarray*}
    p(0,0) &=& F(0,0),\\
    p(0,h) &=& F(0,h), \\
    p(h,h)&=& F(h,h), \\
    \int_0^h \partial_{\tau_1}^1p(\tau_2,\tau_2)\D\tau_2 &=&\int_0^h \partial_{\tau_1}^1F(\tau_2,\tau_2)\D\tau_2.
\end{eqnarray*}
The Filon quadrature reads
\begin{eqnarray*}   &&\int_0^h\int_0^{\tau_2}F(\tau_1,\tau_2)\left(\ee^{\ii\omega n(\tau_1-\tau_2)}+\ee^{\ii\omega n(-\tau_1+\tau_2)}\right)\D\tau_1\D\tau_2 \approx 
\int_0^h\int_0^{\tau_2}p(\tau_1,\tau_2)\left(\ee^{\ii\omega n(\tau_1-\tau_2)}+\ee^{\ii\omega n(-\tau_1+\tau_2)}\right)\D\tau_1\D\tau_2.
\end{eqnarray*}
\\

\noindent \emph{Trivariate integral}

\noindent The last integral to be approximated is 
\begin{eqnarray*}
  \int_0^h \int_0^{\tau_3}\int_0^{\tau_2} F(\tau_1,\tau_2,\tau_3)\ee^{\ii\omega(\tau_1n_1+\tau_2n_2+\tau_3n_3)}\D\tau_1\D\tau_2\D\tau_3,
\end{eqnarray*}
where $F(\tau_1,\tau_2,\tau_3) = \ee^{(h-\tau_3)\mathcal{L}}\alpha_{n_3}[t](\tau_3)\ee^{(\tau_3-\tau_2)\mathcal{L}}\alpha_{n_2}[t](\tau_2)\ee^{(\tau_2-\tau_1)}\alpha_{n_1}[t](\tau_1)\ee^{\tau_1\mathcal{L}}u[t](0)
$.
Function $F$ is interpolated in the nodes $(0,0,0)$, $(0,0,h)$, $(0,h,h)$, $(h,h,h)$ and 
$F(0,0,0)=p(0,0,0)$, $F(0,0,h)=p(0,0,h)$, $F(0,h,h)=p(0,h,h)$, $F(h,h,h)=p(h,h,h)$.
$$F(\tau_1,\tau_2,\tau_3)\approx p(\tau_1,\tau_2,\tau_3)= F(0,0,0)+a_{3,1}\tau_1+a_{3,2}\tau_2+a_{3,3}\tau_3,$$
where
\begin{eqnarray*}
    a_{3,3} &=& \frac{1}{h}(F(0,0,h)-F(0,0,0)),\\
    a_{3,2} &=& \frac{1}{h}(F(0,h,h)-F(0,0,h)),\\
    a_{3,1} &=& \frac{1}{h}(F(h,h,h)-F(0,h,h)),
\end{eqnarray*}
and
\begin{eqnarray*}
    F(0,0,0) &=& \ee^{h\mathcal{L}}\alpha_{n_3}[t](0)\alpha_{n_2}[t](0)\alpha_{n_1}[t](0)u[t](0),\\
     F(0,0,h) &=& \alpha_{n_3}[t](h)\ee^{h\mathcal{L}}\alpha_{n_2}[t](0)\alpha_{n_1}[t](0)u[t](0),\\
      F(0,h,h) &=& \alpha_{n_3}[t](h)\alpha_{n_2}[t](h)\ee^{h\mathcal{L}}\alpha_{n_1}[t](0)u[t](0),\\
      F(h,h,h) &=& \alpha_{n_3}[t](h)\alpha_{n_2}[t](h)\alpha_{n_1}[t](h)\ee^{h\mathcal{L}}u[t](0).
\end{eqnarray*}
Approximation of the trivariate integral is
\begin{eqnarray*}
  &&\int_0^h \int_0^{\tau_3}\int_0^{\tau_2} F(\tau_1,\tau_2,\tau_3)\ee^{\ii\omega(\tau_1n_1+\tau_2n_2+\tau_3n_3)}\D\tau_1\D\tau_2\D\tau_3 \approx\\
  &&\int_0^h \int_0^{\tau_3}\int_0^{\tau_2} (F(0,0,0)+a_{3,1}\tau_1+a_{3,2}\tau_2+a_{3,3}\tau_3)\ee^{\ii\omega(\tau_1n_1+\tau_2n_2+\tau_3n_3)}\D\tau_1\D\tau_2\D\tau_3.
\end{eqnarray*}
\\

\end{appendices}

%%===========================================================================================%%
%% If you are submitting to one of the Nature Portfolio journals, using the eJP submission   %%
%% system, please include the references within the manuscript file itself. You may do this  %%
%% by copying the reference list from your .bbl file, paste it into the main manuscript .tex %%
%% file, and delete the associated \verb+\bibliography+ commands.                            %%
%%===========================================================================================%%

%\bibliography{sn-bibliography}% common bib file
%% if required, the content of .bbl file can be included here once bbl is generated
%%\input sn-article.bbl

\end{document}